\title{The Minkowski sum of linear Cantor sets}
\author[K. G. Hare]{Kevin G. Hare}
\address{Department of Pure Mathematics, University of Waterloo, Waterloo, Ontario, Canada N2L 3G1}
\email{kghare@uwaterloo.ca}
\thanks{Research of K.G. Hare was supported by NSERC Grant 2019-03930}
\author[N. Sidorov]{Nikita Sidorov}
\address{Department of Mathematics, The University of Manchester, Manchester, M13 9PL, United Kingdom}
\email{sidorov@manchester.ac.uk}
\subjclass[2020]{11A63}
\keywords{Iterated function system, Cantor set, Minkowski sum}
\newtheorem{thm}{Theorem}[section]
\newtheorem{example}[thm]{Example}
\newtheorem{remark}[thm]{Remark}
\newtheorem{corollary}[thm]{Corollary}
\newtheorem{definition}[thm]{Definition}
\begin{document}
\begin{abstract}
Let $C$ be the classical middle third Cantor set.
It is well known that $C+C = [0,2]$ (Steinhaus, 1917). (Here $+$ denotes the Minkowski sum.)
Let $U$ be the set of $z \in [0,2]$ which have a unique representation as
    $z = x + y$ with $x, y \in C$ (the set of uniqueness).
It isn't difficult to show that $\dim_H U =  \log(2) / \log(3)$ and $U$
    essentially looks like $2C$.

Assuming $0,n-1 \in A \subset \{0,1,\dots,n-1\}$,
define $C_A = C_{A,n}$ as the linear Cantor set which the attractor of the iterated function system
\[ \{ x \mapsto (x + a) / n: a \in A \}. \]
We consider various properties of such linear
    Cantor sets. Our main focus will be on the structure of $C_{A,n}+C_{A,n}$
    depending on $n$ and $A$ as well as the properties of the set of uniqueness $U_A$.
\end{abstract}
\maketitle

\section{Introduction}

The history of Minkowski sums of Cantor sets is rich. The most famous result in this area is Hall's theorem stating that $CF_4+CF_4=[0,2]$, where $CF_n$ is the set of real numbers whose continued fraction expansion contains only partial quotients $\{1,\dots,n\}$. It is also known that $CF_3+CF_3\neq[0,2]$. For more details see \cite{RS}.

Questions concerning the addition or multiplication of Cantor sets have long
    been of interest -- see for example \cite{ART, CHM, DUB, NP, Prince, Randolph}.
The present paper is concerned with the case when we add a {\em linear} Cantor set to itself, i.e., a set of $n$-ary expansions with deleted digits.

For $A \subset \mathbb{Z}$ and $n \geq 2$ define
    define $C_{A,n}$ as the linear Cantor set satisfying the IFS
    \[ \{ x \mapsto (x + a) / n: a \in A \}. \]
An alternative but equivalent definition is
    \[ C_{A,n} = \left\{\sum_{i=1}^\infty \frac{a_i}{n^i}: a_i \in A\right\}.\]
Often we restrict our attention to $A$ such that
    $0,n-1 \in A \subset \{0, 1, \dots, n-1\}$.
In such cases we will write $C_A = C_{A,n}$.

We say that $0,n-1 \in A \subset \{0,1,\dots,n-1\}$ is {\em $n$-good} if $C_A + C_A = [0,2]$.
It is worth noting that $C_A + C_A = C_{A+A, n}$ where $A + A = \{a + b: a, b \in A\}$.
It is further worth noting that $C_{A+A, n}$ is an example of an IFS satisfying the
    {\em finite type condition} (see \cite{NgaiWang01}).
As such we see that $\dim_H(C_A+C_A) = 1$ if and only if $C_A + C_A$ contains an interval.

Define $U_A$ as the set of $z \in C_A + C_A$ that has a unique representation as $z = x + y$ for $x, y \in C_A$.

\begin{example}
\label{ex:Cantor}
We have that $A = \{0,2\}$ is $3$-good.
To see this we observe that
    \[ C_A = \left\{\sum \frac{a_i}{3^i}: a_i \in \{0,2\}\right\}. \]
This gives us that
\begin{align*}
C_A + C_A & = \left\{\sum \frac{a_i + b_i}{3^i}: a_i, b_i \in \{0, 2\} \right\} \\
          & = \left\{\sum \frac{c_i}{3^i}: c_i \in \{0, 2, 4\} \right\} \\
          & = \left\{2\cdot \sum \frac{c_i}{3^i}: c_i \in \{0, 1, 2\} \right\} \\
          & = \{2\cdot x: x \in [0,1]\} \\
          & = [0,2]
\end{align*}

This construction also allows us to observe when a representation
   in $C_A + C_A$ is not unique.
Namely if $z = \sum \frac{c_i}{3^i}$ with $c_i \in \{0,2,4\}$ we note that
    the representation will not be unique if any $c_i = 2$
    (as we have $c_i = 2 = a_i + b_i = 0 + 2 = 2 + 0$).
In addition, we see that if the $c_i$ are eventually constant and $0$,
    then it is only unique if $c_i = 0$ for all $i$.
Similarly, if the $c_i$ are eventually constant and $4$.

Hence we have that $U_A$ is a set of dimension $\log(2) / \log(3)$.
In fact, it is a subset of $2 C_A$ where we remove the countable
    set of points of the form $k / 3^s$ with $0 < k < 3^s$.
That is
\[ U_A = \left\{\sum \frac{c_i}{3^i}: c_i \in \{0, 4\}, \{c_i\} \text{ not eventually constant} \right\} \cup \{0,2\}. \]

In the other direction, we see that almost all $z \in [0,2]$ have infintely
    many $2$s in their base $3$ expansion with digits $\{0,2,4\}$.
As such, almost all $z \in [0,2]$ have a continuum of representations
    $z = x + y$ with $x, y \in C_A$.
\end{example}

The observation above that $C_{\{0,2\}} + C_{\{0,2\}} = [0,2]$ is well known,
    first being showed in 1917 by Steinhaus \cite{Steinhaus}.
The solution presented above follows that of Shallit \cite{Shallit}.
The observation that $C_A + C_A = C_{A+ A, n}$ will be used through this
    paper.

It is clear that if $A = \{0,1,\dots, n-1\}$ then $A$ is $n$-good.
It is further clear that if $A = \{0, n-1\}$ and $n \geq 4$ then $A$ is
    not $n$-good.
This raises two obvious questions:
\begin{itemize}
\item How small can $A$ be if $A$ is $n$-good?
\item How large can $A$ be if $A$ is not $n$-good?
\end{itemize}

The first of these questions is the main focus of Section~\ref{sec:small A}.
In Theorem \ref{thm:lower bound good} we show that if $A$ is smaller then
   $\mathcal{O}(\sqrt{n})$ is size, then $A$ is not $n$-good.
Further, in Theorem~\ref{thm:upper bound good} we show that this
    bound is tight, giving an construction
    of an $A$ of size $\mathcal{O}(\sqrt{n})$ which is $n$-good.

\begin{remark}
\label{rem:not n good}
It is not hard to show that
    $A = \{0,3,4,5,\dots, n-1\}$ is not $n$-good.
Hence there exist sets $A$ of size $\mathcal{O}(n)$ which
    are not $n$-good.
\end{remark}

Consider $U_A$, the set of $z \in C_A + C_A$ with unique representation
    as $z = x + y$ with $x, y \in C_A$.
For $A = \{0,2\}$ and $n = 3$ we have that $A$ is $n$-good and
    $U_A$ essentially looks like a middle third Cantor set.
For $A = \{0,1,2\}$ and $n=3$ we instead have that $C_A = [0,1]$ and
    hence $U_A = \{0,2\}$.

Heuristically, the smaller $A$ is,
    the more likely we are to having something non-trivial in $U_A$.
This raises a few additional natural questions.
\begin{itemize}
\item How small can $A$ be with $A$ being $n$-good and $U_A = \{0,2\}$?
\item How large can $A$ be with $A$ being $n$-good and $\dim_H(U_A) > 0$?
\item Does there exist $U_A \neq \{0,2\}$ with
    $\dim_H(U_A) = 0$.
\item How large can we make $\dim_H(U_A)$ if $A$ is $n$-good?
\end{itemize}

We present a construction in Corollary~\ref{cor:lower bound small UA} where
    $A$ is $n$-good, $U_A = \{0,2\}$ is trivial and $A$ has size
    $\mathcal{O}(\sqrt{n})$.
This is best possible asymptotically, and any set $A$ with asymptotically
    smaller size would not be $n$-good.

It was surprising to show in Theorem~\ref{thm:dichotomy} that there is a clear
    dividing line between trivial and non-trivial $U_A$.
That is, either $U_A = \{0,2\}$ or $\dim_H(U_A) \geq \log(2)/\log(n) > 0$.
In particular, there does not exist a countable $U_A$.
In Corollary~\ref{cor:upper bound large UA} we give a construction of
    $A$ where $A$ is $n$-good, $\dim_H(U_A) > 0$ is non-trivial
    and $A$ is size $\mathcal{O}(n)$.
This is best possible asymptotically, as the maximal size $A$ can be is
    $\mathcal{O}(n)$.
These are found in Section~\ref{sec:small UA}.

If $z \in U_A$ has a unique
    representation as $z = x + y$ with $x, y\in C_A$, then
    $x = y$.
Hence $U_A \subset 2 C_A$.
This gives that $\dim_H(U_A) \leq \dim_H(C_A)$.
If $A' \subset A$ where both $A'$ and $A$ are
    $n$-good, we have the inequalities

\begin{center}
\begin{tikzpicture}[node distance=1.2cm]
  \node (A) {$\dim_H(U_{A'})$};
  \node (B) [right of=A] {$\geq$};
  \node (C) [right of=B] {$\dim_H(U_{A})$};
  \node (D) [below of=A] [rotate=90] {$\geq$};
  \node (E) [below of=D] {$\dim_H(C_{A'})$};
  \node (F) [right of=E] {$\leq$};
  \node (G) [right of=F] {$\dim_H(C_{A})$};
  \node (H) [below of=C] [rotate=90] {$\geq$};
\end{tikzpicture}
\end{center}

For the last question of how large can $U_A$ be, we only have partial results.
This is the main topic of Section~\ref{sec:large UA}.
We see that if $\dim_H(C_A) = 1$ then $C_A = [0,1]$ and $U_A = \{0,2\}$.
As $\dim_H(U_A) \leq \dim_H(C_A)$ this gives us that $\dim_H(U_A) \leq \log(n-1)/\log(n) < 1$.
(In fact we can improve this slightly, but not significantly with a bit more analysis.)
Computationally it appears that $\dim_H(U_A) \leq \frac{\log(2)}{\log(3)}$
    with equality only if $n = 3^k$ for some $k$.
In fact, this second observation still appears to be true, irrespective of
     whether $A$ is $n$-good.
See Remark~\ref{rem:UA upper} and Figure~\ref{fig:UA upper}.
We show in Corollary~\ref{cor:UA upper} that for all
    $\varepsilon > 0$ and all $n$ sufficiently large that
    we can construct an $A$ which is $n$-good
    and such that $\dim_H(U_A) \geq \frac{\log(2)}{\log(3)} -\varepsilon$.

Similar to the observation in Example \ref{ex:Cantor}, and reminiscent of \cite{Sidorov}, we have 
\begin{thm}
Let $A$ be $n$-good.  
Then almost all $z \in [0,2]$ have a continuum of representations
    $z = x + y$ with $x, y \in C_A$.
\end{thm}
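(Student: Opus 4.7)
The plan is to generalize the analysis of Example~\ref{ex:Cantor} to an arbitrary $n$-good $A$. Write $B = A+A$ and, for each $c \in B$, put $r(c) = \#\{(a,b) \in A \times A : a+b = c\}$. Since $A$ is $n$-good, $C_{B,n} = C_A + C_A = [0,2]$, so every $z \in [0,2]$ admits at least one digit expansion $(c_i) \in B^{\mathbb N}$ with $z = \sum_{i \geq 1} c_i/n^i$. Each such expansion lifts to exactly $\prod_{i \geq 1} r(c_i)$ pairs of sequences $((a_i),(b_i)) \in A^{\mathbb N} \times A^{\mathbb N}$ with $a_i + b_i = c_i$ for all $i$, and each lift gives a representation $z = x + y$ with $(x,y) = \bigl(\sum a_i/n^i,\, \sum b_i/n^i\bigr) \in C_A \times C_A$. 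If $r(c_i) \geq 2$ for infinitely many $i$, the number of lifts is at least $2^{\aleph_0}$; since the coding map $A^{\mathbb N} \to C_A$ is at most two-to-one (its only collisions are the standard base-$n$ ambiguities at $n$-adic rationals, which exist because $0, n-1 \in A$), these lifts descend to a continuum of distinct pairs $(x,y)$.

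Set $\tilde B = \{c \in B : r(c) = 1\}$. If $c = a + b$ is the unique decomposition of $c \in \tilde B$ in $A \times A$, then $(b,a)$ is also such a decomposition, forcing $a = b$; hence $\tilde B \subseteq 2A$ and $|\tilde B| \leq |A|$. We may assume $|A| < n$, for otherwise $A = \{0,1,\dots,n-1\}$, $C_A = [0,1]$, and every $z \in (0,2)$ trivially admits a continuum of representations. Under $|A| < n$ we get $|\tilde B| < n$, so the standard similarity-dimension bound yields $\dim_H C_{\tilde B,n} \leq \log|\tilde B|/\log n < 1$, and therefore $C_{\tilde B,n}$ has Lebesgue measure zero in $[0,2]$.

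Let $H \subseteq [0,2]$ denote the set of $z$ admitting at least one expansion $(c_i) \in B^{\mathbb N}$ with $c_i \in \tilde B$ for every sufficiently large $i$. Splitting by the first index from which the tail lies in $\tilde B$ gives
\[
  H = \bigcup_{k \geq 0}\ \bigcup_{(c_1,\dots,c_k) \in B^k} \left(\sum_{i=1}^{k} \frac{c_i}{n^i} + \frac{1}{n^k}\,C_{\tilde B,n}\right),
\]
a countable union of translated rescalings of the Lebesgue-null set $C_{\tilde B,n}$, so $H$ is itself Lebesgue-null. For any $z \in [0,2] \setminus H$, no $B$-expansion of $z$ is eventually in $\tilde B$; picking any such expansion, we obtain $r(c_i) \geq 2$ for infinitely many $i$, and the first paragraph then produces a continuum of representations of $z$ as $x+y$ with $x,y \in C_A$. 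The main technical point is recognizing that $\tilde B \subseteq 2A$ keeps $|\tilde B|$ strictly below $n$, which is precisely what makes the controlling self-similar set $C_{\tilde B,n}$ Lebesgue-null; once that is secured, the exceptional set $H$ collapses by a routine countable-union-of-null-sets argument.
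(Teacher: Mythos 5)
Your argument is correct, and while it shares the paper's central lifting idea, it justifies the measure-theoretic step by a genuinely different and more complete route. Both you and the paper use the observation that an $(A+A)$-expansion whose digits admit two or more decompositions infinitely often lifts to continuum many pairs $(x,y)$ with $x,y\in C_A$; your remark that the coding map $A^{\mathbb N}\to C_A$ is at most two-to-one, so that continuum many lifts still give continuum many real pairs, is the right way to make that precise. The divergence is in showing the exceptional set is null. The paper works only with the single ambiguous digit $n-1$ (available since $0,n-1\in A$) and asserts, in the spirit of \cite{Sidorov}, that almost every $z\in[0,2]$ has an $(A+A)$-expansion containing infinitely many $(n-1)$'s, giving no details; note this is not an instance of ordinary Borel normality, since $A+A$ is a redundant and generally incomplete digit set. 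You instead consider the full set $\tilde B$ of uniquely decomposable sums, observe $\tilde B\subseteq 2A$ so $\#\tilde B\le\#A<n$ (after disposing of $A=\{0,1,\dots,n-1\}$ separately, which is needed and correctly handled), and cover the bad set $H$ by countably many affine copies of $C_{\tilde B,n}$, whose dimension is at most $\log\#\tilde B/\log n<1$, hence Lebesgue null. This buys a self-contained proof of the ``almost all'' step, and in fact yields slightly more than the paper claims (for a typical $z$, \emph{every} $(A+A)$-expansion meets ambiguous digits infinitely often), at the cost of a bit more bookkeeping than the paper's one-line appeal to the digit $n-1$.
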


\begin{proof}
To see this note that $n-1 = 0 + (n-1) = (n-1) + 0$ has (at least) two
    representations as $n-1 = a_1 + a_2$ with  $a_1, a_2 \in A$.
We see that almost all $z \in [0,2]$ have infintely
    many $(n-1)$s in their base $n$ expansion with digits in $A+ A$.
As such, almost all $z \in [0,2]$ have a continuum of representations
    $z = x + y$ with $x, y \in C_A$.
\end{proof}

In the results above, we were interested in $0,n-1 \in A \in \{0,1,\dots,n-1\}$
    where $A$ was $n$-good.
That is, where $C_A + C_A = [0,2]$.

More generally, we can ask what the possible structures of $C_A + C_A$ can have.
In Theorem~\ref{thm:structure} we show
    that if $0, n-1\in A \subset \{0,1,\dots, n-1\}$ then the structure
    of $C_A + C_A$ is one of three possible shapes.
Namely, either it is a Cantor set, a full interval (i.e. $n$-good),
    or a countable collections of intervals and gaps.

In Section \ref{sec:comments}, we consider how the answers to the above questions
    change if we allow general $A \in \mathbb{Z}$, or we only require
    $C_A + C_A$ to contain an interval.

In the final section, Section \ref{sec:conc}, we give some final concluding
    remarks and indicate possible directions for future research.

\section{Results for small $A$}
\label{sec:small A}

In this section we consider how small we can have $A$ if $A$ is $n$-good.

\begin{thm}
\label{thm:lower bound good}
If $\#A < \sqrt{n}$, then $A$ is not $n$-good.
\end{thm}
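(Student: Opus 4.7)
The plan is to reduce the question to an elementary cardinality estimate for the sumset $A+A$, via the identity $C_A+C_A = C_{A+A,n}$ that has already been noted in the introduction.

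First I would translate ``$A$ is $n$-good'' into a covering condition. If $A$ is $n$-good then $C_{A+A,n}=[0,2]$, and because $[0,2]$ is then the attractor of the IFS consisting of the maps $x\mapsto (x+c)/n$ for $c\in A+A$, the Hutchinson fixed-point equation gives
\[
[0,2] \;=\; \bigcup_{c\in A+A}\left[\tfrac{c}{n},\tfrac{c+2}{n}\right].
\]
Each of the $\#(A+A)$ closed intervals on the right has length $2/n$, so subadditivity of Lebesgue measure forces $\#(A+A)\cdot (2/n)\geq 2$, that is, $\#(A+A)\geq n$.

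Second, I would invoke the elementary sumset bound $\#(A+A)\leq \binom{\#A+1}{2}=\#A(\#A+1)/2$, which follows at once from commutativity of addition (every element of $A+A$ is some $a_i+a_j$ with $a_i\leq a_j$). Writing $k=\#A$, this yields $\#(A+A)\leq k^2$ for all $k\geq 1$. Combining the two bounds gives $n\leq k^2$, hence $k\geq\sqrt n$; contrapositively, $\#A<\sqrt n$ forces $A$ to fail being $n$-good. The argument is essentially a measure/pigeonhole estimate, so I foresee no real obstacle; the only step meriting any care is verifying that the Hutchinson equation truly gives a cover of $[0,2]$ by closed intervals of length $2/n$, which holds because $[0,2]$ is by hypothesis invariant under the IFS.
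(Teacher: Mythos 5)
Your proof is correct, but it follows a genuinely different route from the paper. The paper argues dimension-theoretically: it uses $\underline{\dim}_B(C_A+C_A)\le 2\,\overline{\dim}_B C_A$ together with the open set condition to get $\overline{\dim}_B C_A=\dim_H C_A=\log\#A/\log n<\tfrac12$, hence $\underline{\dim}_B(C_A+C_A)<1$ and $C_A+C_A\neq[0,2]$. You instead combine the identity $C_A+C_A=C_{A+A,n}$ with a one-step covering argument: invariance of $[0,2]$ under the IFS with digit set $A+A$ gives a cover of $[0,2]$ by $\#(A+A)$ intervals of length $2/n$, forcing $\#(A+A)\ge n$, while the trivial bound $\#(A+A)\le \#A(\#A+1)/2\le(\#A)^2$ closes the argument. (Your covering step is sound; one can even avoid invoking the Hutchinson equation, since any point of $C_{A+A,n}$ with first digit $c$ lies in $[c/n,(c+2)/n]$.) Your argument is more elementary — no dimension theory at all — and actually yields the slightly stronger conclusion that $n$-goodness forces $\#A(\#A+1)/2\ge n$, i.e.\ $\#A\gtrsim\sqrt{2n}$. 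What the paper's approach buys in return is robustness: since it only uses $\dim_H(C_A+C_A)=1$, it rules out even the weaker variants of goodness where $C_A+C_A$ merely contains an interval (a point the paper exploits in Section 5), whereas your single-level count only directly excludes $C_A+C_A=[0,2]$; to recover the stronger exclusion you would iterate the covering to depth $m$, giving Lebesgue measure at most $2\left(\#(A+A)/n\right)^m\to 0$ when $\#(A+A)<n$.
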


\begin{proof}
It is trivial that $\underline{\dim}_B (B_1+B_2)\le \overline{\dim}_B B_1 + \overline{\dim}_B B_2$.
In our setting $B_1=B_2 = C_A$ 
    is self-similar and satisfies the open set condition, whence
\[
\overline{\dim}_B B_1 = 
\overline{\dim}_B C_A=\dim_H C_A=\frac{\log \#A}{\log n}< \frac{1}{2}.
\]
Hence $\underline{\dim}_B (C_A+C_A)<1$, so $A$ is not $n$-good.
\end{proof}

\begin{thm}
\label{thm:upper bound good}
For all $n$ there exists an $A$ with $\#A = \mathcal{O}(\sqrt{n})$ where $A$ is $n$-good.
\end{thm}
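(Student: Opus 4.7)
The plan is to reduce the claim to a purely additive-combinatorial statement. Using the identity $C_A+C_A=C_{A+A,n}$ from the introduction, observe that if $A+A\supseteq\{0,1,\dots,2n-2\}$, then $C_{A+A,n}$ contains the attractor of the full IFS $\{x\mapsto(x+d)/n:d\in\{0,\dots,2n-2\}\}$, which is easily checked to be $[0,2]$ (consecutive images $[d/n,(d+2)/n]$ overlap). The reverse inclusion $C_{A+A,n}\subseteq[0,2]$ is automatic from $A\subseteq\{0,\dots,n-1\}$. So it suffices to exhibit, for each $n$, a set $A\subseteq\{0,1,\dots,n-1\}$ with $0,n-1\in A$, $|A|=O(\sqrt n)$, and $A+A\supseteq\{0,1,\dots,2n-2\}$.

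For this I would use a two-scale (postage-stamp) construction together with a symmetrisation. Set $k=\lceil\sqrt n\,\rceil$ and $M=\lfloor(n-1)/k\rfloor$, and let
\[
B=\{0,1,\dots,k-1\}\cup\{k,2k,\dots,Mk\}.
\]
Then $B\subseteq\{0,1,\dots,n-1\}$ and $|B|=k+M=O(\sqrt n)$. Euclidean division $m=jk+r$ with $0\le j\le M$, $0\le r\le k-1$ shows that $m=jk+r\in B+B$ for every $m\in\{0,1,\dots,Mk+k-1\}$, which by the choice of $M$ contains $\{0,1,\dots,n-1\}$.

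Now symmetrise: put $A=B\cup(n-1-B)$. Then $A\subseteq\{0,1,\dots,n-1\}$, $\{0,n-1\}\subseteq A$, $|A|\le 2|B|=O(\sqrt n)$, and
\[
A+A\supseteq(B+B)\cup\bigl(2(n-1)-(B+B)\bigr)\supseteq\{0,\dots,n-1\}\cup\{n-1,\dots,2n-2\},
\]
so $A+A\supseteq\{0,1,\dots,2n-2\}$. By the opening reduction, $A$ is $n$-good.

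There is no serious obstacle; the only mild care needed is the choice of $M$, which must be small enough to keep $Mk\le n-1$ but large enough that $Mk+(k-1)\ge n-1$, and both are satisfied by $M=\lfloor(n-1)/k\rfloor$. Any other classical additive basis of size $O(\sqrt n)$ would work equally well, but the postage-stamp choice above is the simplest and already matches the lower bound of Theorem~\ref{thm:lower bound good} up to a constant factor.
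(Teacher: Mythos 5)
Your argument is correct and is essentially the paper's own: the paper likewise takes a two-scale (postage-stamp) set --- an initial interval $\{0,\dots,k\}$, the multiples $\{0,k,2k,\dots\}$, and a block at the top near $n-1$ playing the role of your symmetrisation --- shows $A+A=\{0,1,\dots,2n-2\}$ via Euclidean division, and concludes $C_{A+A,n}=[0,2]$. The only differences are cosmetic (you symmetrise the whole set $B$ rather than just adjoining the top interval, and you spell out the reduction that a full digit set forces the attractor to be $[0,2]$).
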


\begin{proof}
Choose $k \approx \sqrt{n}$.

Set
\begin{align*}
A_1 & = \{0,1,2,\dots,k\} \\
A_2 & = \{n-1,n-2,n-3,\dots,n-1-k\} \\
A_3 &= \{0,k,2k,3k,\dots,tk\},
\end{align*}
where $n-1-k \leq tk \leq n-1$.

We claim that $A = A_1 \cup A_2 \cup A_3$ is $n$-good.
As $\#A_i = \mathcal{O}(\sqrt{n})$, this will prove the result.

Consider $0 \leq a \leq 2n-2$.
We claim that $a \in A+A$.
First, assume $a \leq n-1$.
Write $a = k a_1 + a_2$ for some
    $a_1 \in \{0,1,\dots,t\}$ and
    $a_2 \in \{0,1,\dots,k\}$.
We see that $k a_1 \in A_3 \subset A$ and $a_2 \in A_1 \subset A$.
Hence $a$ is in $A+A$.

If $a \geq n$ we use a similar construction using $A_2$ instead of $A_1$.

This implies that the maximal distance between consecutive terms in
     $A+A$ is $1$.

As $C_A + C_A = C_{A+A, n}$ this suffices to prove that $A$ is $n$-good.
\end{proof}

\begin{corollary}
\label{cor:lower bound small UA}
For all $n$ there exists an $A$ with $\#A = \mathcal{O}(\sqrt{n})$ where $A$ is $n$-good and $U_A = \{0,2\}$.
\end{corollary}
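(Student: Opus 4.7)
The plan is to take the construction $A = A_1 \cup A_2 \cup A_3$ from Theorem~\ref{thm:upper bound good}, possibly tweaking $k$ so that $k \mid (n-1)$ (which ensures $A_3$ terminates cleanly at $n-1 \in A_2 \cap A_3$). Such a $k$ of size $\Theta(\sqrt n)$ is available for all but finitely many $n$; when $n-1$ has no divisor near $\sqrt n$, one adjoins $\mathcal{O}(1)$ bridging elements. This keeps $\#A = \mathcal{O}(\sqrt{n})$ and $n$-goodness intact, so the content is to verify $U_A = \{0, 2\}$.

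The key reduction is that $z \notin U_A$ as soon as there is some $(A+A)$-representation $z = \sum_i c_i/n^i$ in which one digit $c_{i_0}$ admits a decomposition $c_{i_0} = a + b$ with $a, b \in A$ and $a \neq b$: the swap $(a,b) \leftrightarrow (b,a)$ then gives two distinct ordered pairs in $C_A \times C_A$ summing to $z$. Call such a $c_{i_0}$ a \emph{collision digit}. The task splits into (i) showing every $c \in (A+A) \setminus \{0, 2(n-1)\}$ is a collision digit, and (ii) showing every $z \in (0,2)$ has some $(A+A)$-representation containing a collision digit.

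For (i), odd $c$ and even $c \notin 2A$ are automatically collision. The remaining ``diagonal'' values $c = 2a$, $a \in A$, are covered by $(a-1, a+1) \subset A_1$ when $a \in \{1, \ldots, k-1\}$, by the interior pairs $((j-1)k, (j+1)k) \subset A_3$ when $a = jk$ is an interior multiple, by the symmetric construction in $A_2$ near $n-1$, and by the shared endpoints $0 \in A_1 \cap A_3$ and $n-1 \in A_2 \cap A_3$ at the boundaries. For (ii), the only worrisome $z$'s are those whose every $(A+A)$-representation uses only $\{0, 2(n-1)\}$; these necessarily have the form $z = 2(n-1) \sum_{i \in S} 1/n^i$ for some $\emptyset \neq S \neq \mathbb{N}$. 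If $S$ is a tail $[N, \infty)$ with $N \geq 2$, then $z = 2/n^{N-1}$ admits the alternate one-digit representation $c_{N-1} = 2$, and $2 = 0 + 2 = 1+1$ is a collision digit. Otherwise there exists $i_0 \in S$ with $i_0 + 1 \notin S$, and the local substitution $(c_{i_0}, c_{i_0+1}) = (2(n-1), 0) \mapsto (2n-3, n)$ preserves $z$ (using $1/n^{i_0} = n/n^{i_0+1}$) and introduces the two collision digits $2n-3$ (odd) and $n = 1 + (n-1)$, with $1 \in A_1$ and $n-1 \in A_2$.

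The main obstacle is the bookkeeping in (i): one must verify that no diagonal $c = 2a$ slips through at the boundaries between $A_1, A_2, A_3$. The divisibility $k \mid (n-1)$ (or, failing that, a constant-size augmentation of $A$) is precisely what ensures the shared endpoint $n-1 \in A_2 \cap A_3$ and so lets the $A_3$-based decomposition reach the top of $A+A$ without exception. Once this is in place the cases assemble into the assertion $U_A \subseteq \{0, 2\}$; the reverse inclusion is immediate from $0 = 0+0$ and $2 = 1 + 1$ being the unique representations of $0$ and $2$.
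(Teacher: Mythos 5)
Your proposal is correct and takes essentially the same approach as the paper: the paper's proof simply cites the construction of Theorem~\ref{thm:upper bound good}, and your digit-collision verification (every digit in $(A+A)\setminus\{0,2n-2\}$ splits as $a+b$ with $a\neq b$, plus rewriting the all-$\{0,2n-2\}$ expansions) just fills in the details the paper leaves implicit. The tweak $k\mid(n-1)$ is harmless, since enlarging $A$ only destroys uniqueness, but it is not actually needed: the boundary diagonals $2a$ (for $a$ at the bottom of $A_2$ or the top of $A_3$) already decompose as a sum of one element of $A_3$ and one of $A_2$ in the original construction.
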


\begin{proof}
The construction in Theorem \ref{thm:upper bound good} is an example
    of an $A$ with this property.
\end{proof}

\begin{example}
Let
\begin{align*}
A = & \{0,1,2,\dots,8,9, 10 \} \cup \{0,10, 20, 30, \dots, 80, 90, 100\} \cup\\ 
    & \{90, 91, 92,\dots,98,99,100\}.
\end{align*}
It is easy to check that $A+A = \{0,1,\dots, 200\}$.
Hence $A$ is $101$-good.
\end{example}

\section{Small non-trivial $U_A$}
\label{sec:small UA}

In this section we show that either $U_A$ is trivial, or the dimension
    of $U_A$ is bounded below.

Before proving this, we need to introduce some notation and common
    techniques for graph directed iterated function systems.

Let $G$ be a transitive directed graph with the set of vertices $V = \{1,2,\dots, n\}$.
We allow the directed graph to have loops and multiple edges between vertices.
To each edge we associate a linear contraction $S:\mathbb{R} \to \mathbb{R}$.
By \cite{MW}, there exists unique non-empty compact sets
    $K_1, K_2, \dots, K_n$ associated to each vertex such that
    \[ K_i = \bigcup S_{j,i}(K_j),\]
    where the union is taken over all vertices $j \in V$ and
    all edges mapping from vertex $j$ to vertex $i$.
We say that $K_i$ is the {\em attractor associated to vertex $i$}.
The digraph and associated contractions is called a {\em graph directed iterated
    function system (GDIFS)}, and the
    $K_i$ is the {\em attractor associated to vertex $i$}.
See \cite{MW} for further details.

\begin{thm}
\label{thm:dichotomy}
Assume that $A$ is $n$-good.
Either $\dim_H(U_A) \geq \log(2)/\log(n)$ or $U_A = \{0,2\}$.
\end{thm}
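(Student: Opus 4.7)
First, I would establish a shift-invariance lemma: if $2x \in U_A$ with $x = (a+y)/n$ for some $a \in A$, $y \in C_A$, then $2y \in U_A$. Indeed, any decomposition $2y = \xi + \eta$ with $\xi, \eta \in C_A$ lifts via the contraction $u \mapsto (a+u)/n$ to $2x = (a+\xi)/n + (a+\eta)/n$, and uniqueness of the decomposition of $2x$ forces $\xi = \eta = y$. Consequently the set $W := \{y \in C_A : 2y \in U_A\}$ is closed under the digit-stripping shift $T : C_A \to C_A$, and the theorem reduces to showing that either $W = \{0,1\}$ or $\dim_H W \ge \log 2 / \log n$.

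Next, I would realise $U_A$ (equivalently $2W$) as an attractor of a finite graph-directed IFS. Since the overlap IFS $\{f_c(x) = (x+c)/n : c \in A+A\}$ on $[0,2]$ is of finite type, its local overlap structure stabilises into finitely many neighbourhood types; take these as vertices. Edges record admissible digit choices $c \in A+A$ together with the resulting change of type. The ``uniqueness subgraph'' keeps only those edges along which no new competing decomposition is introduced, so that $U_A$ is precisely the set of points whose symbolic itinerary from the initial vertex traces an infinite path in this subgraph. The two constant paths (digits all $0$ or all $2n-2$) produce exactly $z = 0$ and $z = 2$.

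Now assume $U_A \neq \{0,2\}$ and fix $x^* \in W \setminus \{0,1\}$. Its symbolic itinerary is a non-trivial infinite path, so it eventually enters some strongly connected component $\mathcal{C}$ of the uniqueness subgraph. The core combinatorial claim is that every vertex of $\mathcal{C}$ has out-degree at least $2$ within the uniqueness subgraph. Granting this, the restriction of the GDIFS to $\mathcal{C}$ realises, via the dimension formula from \cite{MW}, an attractor of Hausdorff dimension at least $\log 2 / \log n$, and this attractor is contained in $2W = U_A$.

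The main obstacle is this branching claim. The natural candidates for a second outgoing edge at any vertex of $\mathcal{C}$ are the extreme digits $c = 0$ and $c = 2n-2$, each of which always has unique decomposition $r(c) = 1$ in $A^2$. Their admissibility in the uniqueness subgraph depends on a ``half-interval'' condition on the tail of the itinerary, which one controls using the shift-invariance lemma: iterate $T$ until the tail lies in a sub-interval of $[0,2]$ compatible with both extreme digits simultaneously. Verifying that this half-interval bookkeeping can be executed along every infinite path entering a non-trivial strongly connected component --- and, crucially, that $n$-goodness prevents the nearby competing digit-sums $c \pm 1$ and $c \pm 2$ from spoiling uniqueness at the extreme-digit edges --- is the technical heart of the argument.
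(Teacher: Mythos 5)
Your opening reduction (the shift-invariance of $U_A$ under stripping a common digit) is correct, and the general strategy of realising $U_A$ inside a graph-directed IFS and extracting dimension from branching is in the same spirit as the paper. But the proof has a genuine gap exactly where you say the ``technical heart'' lies: the claim that every vertex of the strongly connected component $\mathcal{C}$ entered by the itinerary of $x^*$ has out-degree at least $2$ in the uniqueness subgraph is neither proved nor obviously true. Nothing in your argument prevents $x^*$ from being a point of $U_A\setminus\{0,2\}$ whose itinerary ends in a trivial component, for instance the component consisting of the single self-loop labelled by the maximal digit $2n-2$ (address tail $(2n-2)(2n-2)\cdots$), where the out-degree is $1$. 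Deciding whether such points can belong to $U_A$ requires an explicit argument with alternative addresses: since $A$ is $n$-good, consecutive elements of $A+A$ differ by at most $2$, so a point with address $c\,(2n-2)(2n-2)\cdots$ also admits an address beginning with $c+1$ or $c+2$, and one must check whether the induced pairs $(x,y)$ really differ. This delicate borderline case is precisely the content of the theorem (ruling out small but nontrivial $U_A$), and your sketch defers it rather than resolving it. Moreover, the per-vertex out-degree claim is stronger than what is needed, which makes it harder to establish than the statement itself.

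For comparison, the paper avoids the finite-type neighbourhood machinery entirely. It subdivides $[0,2]$ into the $2n$ intervals $I_\ell=[\ell/n,(\ell+1)/n]$, classifies each as type $L$, $R$ or $O$ according to whether it is covered uniquely by a left half $S_c([0,1])$, uniquely by a right half $S_c([1,2])$, or otherwise, and builds a two-vertex GDIFS with attractors $\mathcal{L},\mathcal{R}$ whose adjacency matrix $\begin{bmatrix} a & b \\ c & d \end{bmatrix}$ has integer entries and $a,d\ge 1$ (because $I_0$ is of type $L$ and $I_{2n-1}$ of type $R$). Integrality then forces the Perron eigenvalue to be either $1$ or at least $2$; the eigenvalue-$\ge 2$ case gives $\dim_H U_A\ge \log 2/\log n$ directly, and in the eigenvalue-$1$ case ($a=d=1$, $bc=0$) a short explicit argument of exactly the ``alternative address'' type sketched above shows the surviving points are images of $0$ or $2$ with second representations, hence $U_A=\{0,2\}$. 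If you want to salvage your approach, you should replace the per-vertex branching claim by this kind of global spectral dichotomy for a small integer matrix, and supply the alternative-address argument in the degenerate case; as written, the proposal does not prove the theorem.
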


\begin{proof}
Let $n$ be fixed and $A = \{0 =: a_0 < a_1 < \dots < a_k := n-1\}$
    be $n$-good.

Partition $[0,2]$ into intervals $I_\ell = [\ell/n, (\ell+1)/n]$ for
    $\ell = 0, 1, \dots, 2n-1$.
We see for each pair $(a_i, a_j) \in A \times A$ such that
    $S_{a_i + a_j} ([0,2]) = I_{a_i + a_j} \cup I_{a_i + a_j + 1}$.
In particular, $S_{a_i+a_j}([0,1]) = I_{a_i + a_j}$ and 
    $S_{a_i + a_j}([1,2]) = I_{a_i + a_j +1 }$.

We see that each interval is covered by the left half of some image,
    or the right half of some image, or possibly both.
We wish to identify those that are covered uniquely by the left half of
    some image and by no right half, and similarly those that are covered uniquely by the
    right half of some image and no left half.

More precisely, we say an interval $I_\ell$ is {\em of type $L$} (for left) if
    there exists a unique pair $(a_i, a_j) \in A \times A$ such that
    $I_\ell = S_{a_i + a_j}([0,1])$, and that for all pairs
    $(a_i', a_j')$ we have $I_\ell \cap S_{a_i' + a_j'}((1,2)) = \varnothing$.
Similarly, an interval $I_\ell$ is {\em of type $R$} (for right) if
    there exists a unique pair $(a_i, a_j) \in A \times A$ such that
    $I_\ell = S_{a_i + a_j}([1,2])$, and that for all pairs
    $(a_i', a_j')$ we have $I_\ell \cap S_{a_i' + a_j'}((0,1)) = \varnothing$.
All other intervals will be {\em of type $O$} (for other).

If $I_\ell$ is of type $O$ then all points in $I_\ell$ have
    multiple representations, and hence are not in $U_A$.

Consider the graph directed iterated function system given by
\begin{align*}
\mathcal{L} & = \left(
                \bigcup_{\substack{S_\ell \text{ is of type $L$} \\
                          0 \leq \ell \leq n-1}}
                S_{\ell} (\mathcal{L}) \right)
                \cup
                \left(
                \bigcup_{\substack{S_\ell \text{ is of type $R$} \\
                          0 \leq \ell \leq n-1}}
                S_{\ell} (\mathcal{R}) \right) \\
\mathcal{R} & = \left(
                \bigcup_{\substack{S_\ell \text{ is of type $L$} \\
                          n \leq \ell \leq 2n-1}}
                S_{\ell} (\mathcal{L}) \right)
               \cup
                \left(
                \bigcup_{\substack{S_\ell \text{ is of type $R$} \\
                          n \leq \ell \leq 2n-1}}
                S_{\ell} (\mathcal{R}) \right) \\
\end{align*}
From \cite{MW} we see that $\dim_{H}(\mathcal{L}) = \dim_H(\mathcal{R})$.
We see that $U_A \cap [0,1] \subset \mathcal{L}$ and
    $U_A \cap [1,2] \subset \mathcal{R}$.
Hence $U_A \subset \mathcal{L} \cup \mathcal{R}$.
We have that $\left(\mathcal{L} \cup \mathcal{R}\right) \setminus U_A$
    is at most a countable number of points.
To see this, we note that the only points in $\mathcal{L} \cup \mathcal{R}$
    that are not in $U_A$ are those points with are images of $0$ or $2$
    under finite compositions of these maps.
Hence $\dim_{H}(U_A) = \dim_{H}(\mathcal{L}) = \dim_H(\mathcal{R})$.

Let \[ M = \begin{bmatrix}a & b \\ c & d \end{bmatrix}\]
    be the adjacency matrix for this graph directed iterated
    function system.
Here $a$ is the number of $\ell$ with $0 \leq \ell \leq n-1$ such that $I_\ell$ is of type
    $L$.
Similarly,
    $b$ is the number of $\ell$ with $0 \leq \ell \leq n-1$ such that $I_\ell$ is of type $R$,
    $c$ is the number of $\ell$ with $n \leq \ell \leq 2n-1$ such that $I_\ell$ is of type $L$, and
    $d$ is the number of $\ell$ with $n \leq \ell \leq 2n-1$ such that $I_\ell$ is of type $R$.

As $I_0$ is of type $L$ and $I_{2n-1}$ is of type $R$ we see that
    $a, d \geq 1$.
Hence the maximal eigenvalue of $M$ is greater than or equal to $1$.

If the Perron-Frobenius eigenvalue of $M$ is 1, then $a = d = 1$ and $b c = 0$.
Assume without loss of generality that $c = 0$.
If $b = 0$ then $U_A = \{0,2\}$ and we are done.
Hence assume that $b \geq 1$.

In this case $\mathcal{R} = \{2\}$.
For all $1 \leq \ell \leq n-1$ where $I_{\ell}$ is of type $R$ we
    see that the point $S_{\ell-1}(2) \in \mathcal{L}$.
Although these points are in $\mathcal{L}$, they are not points with
    unique representations.
To see this we note that $S_{\ell-1}(2)$ has address
    $(\ell-1) (2n-2) (2n-2) (2n-2) \dots$.
As there are no intervals $I_\ell$ with $1 \leq \ell \neq 2n-2$
    of type $L$ we see that $\ell \in A+A$.
Hence this point also has address $(\ell) 0 0 0 \dots$.
As this point has at least two representation, it is not in $U_A$.
Hence $\mathcal{L} = \{0\}$, and  so $U_A = \{0,2\}$.

Recall that $a, d \geq 1$ and $b, c$ are non-negative integers.
If the Perron-Frobenius eigenvalue $\lambda$ of $M$ is greater than $1$,
    then $b, c \geq 1$ and hence $\lambda \geq 2$.
This gives us that
     $\dim_H(U_A) = \dim_H(\mathcal{L}) =\dim_H(\mathcal{R}) = \log(\lambda)/\log(n) \geq \log(2)/\log(n) > 0$, and the result follows.
\end{proof}

\begin{example}
\label{ex:UK8}
Consider $A = \{0,2,5,7\}$.
We see that $A + A = \{0,2,4,5,7,9,10,12,14\}$.
It is worth noting that $0, 4, 10, 14$ all have unique representations
    as $a + a'$ with $a, a' \in A$.
As the maximal distance between consecutive terms is $2$, we see that
    $A$ is $8$-good.
Subdividing $[0,2]$ into $16$ intervals, we see that
\begin{align*}
I_0, I_1, \dots, I_7 & = L, R, O, O, L, O, O, O \\
I_8, I_0, \dots, I_{15} & = O, O, O, R, O, O, L, R
\end{align*}
From this we see that
\begin{align*}
\mathcal{L} & = S_{0} (\mathcal{L}) \cup S_{0} (\mathcal{R}) \cup
                S_{4} (\mathcal{L}) \\
\mathcal{R} & = S_{10} (\mathcal{R}) \cup S_{14} (\mathcal{L}) \cup
                S_{14} (\mathcal{R}).
\end{align*}
We can represent this by the directed graph in Figure \ref{fig:8verygood}.

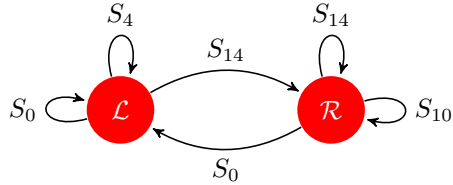
\begin{figure}[H]
\begin{center}
\begin{tikzpicture}[->,>=stealth',shorten >=1pt,auto,node distance=2.8cm, semithick]
  \tikzstyle{every state}=[fill=red,draw=none,text=white]

  \node[state]         (A)              {$\mathcal{L}$};
  \node[state]         (B) [right of=A] {$\mathcal{R}$};

  \path (A) edge [loop left]  node {$S_0$}         (A)
            edge [loop above]  node {$S_4$} (A)
            edge [bend left]  node {$S_{14}$} (B)
        (B) edge [loop right]  node {$S_{10}$}         (B)
            edge [loop above]  node {$S_{14}$} (B)
            edge [bend left]  node {$S_{0}$} (A);
\end{tikzpicture}
\end{center}
\caption{GDIFS Diagram}
\label{fig:8verygood}
\end{figure}

The incidence matrix of the graph directed IFS is $\begin{bmatrix} 2 & 1 \\ 1 & 2 \end{bmatrix}$, which has a maximal eigenvalue of $\lambda = 3$.
This gives us that $\dim_H(U_A) = \frac{\log(\lambda)}{\log(8)} =
    \frac{\log(3)}{\log(8)} \approx 0.52832$.
\end{example}

\begin{corollary}
\label{cor:upper bound large UA}
Let $A$ be $n$-good.
If $1, n-2 \not\in A$ then $\dim_H U_A \geq \log(2)/\log(n) > 0$.
\end{corollary}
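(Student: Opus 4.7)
The plan is to reduce the statement to Theorem~\ref{thm:dichotomy}: since $A$ is $n$-good, that theorem already guarantees that either $U_A=\{0,2\}$ or $\dim_H U_A \geq \log(2)/\log(n)$, so it suffices to exhibit, in the matrix $M=\begin{bmatrix} a & b \\ c & d \end{bmatrix}$ constructed in the proof of Theorem~\ref{thm:dichotomy}, that both off-diagonal entries $b$ and $c$ are positive. As noted in that proof, this forces the Perron--Frobenius eigenvalue of $M$ to be at least $2$, and hence $\dim_H U_A \geq \log(2)/\log(n)$.

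The first step is to observe that the hypothesis $1, n-2 \notin A$ forces $1 \notin A+A$ and $2n-3 \notin A+A$. Indeed, since $A \subset \{0,1,\dots,n-1\}$, any decomposition $1 = a+a'$ with $a,a'\in A$ would require $\{a,a'\}=\{0,1\}$, contradicting $1 \notin A$; symmetrically, $2n-3=a+a'$ forces $n-2\in\{a,a'\}$, contradicting $n-2\notin A$.

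The second step is to unpack the type criterion extracted from the proof of Theorem~\ref{thm:dichotomy}: an interval $I_\ell$ is of type $L$ precisely when $\ell\in A+A$ admits a unique decomposition as $a_i+a_j$ and $\ell-1\notin A+A$, and dually $I_\ell$ is of type $R$ precisely when $\ell-1\in A+A$ has a unique decomposition and $\ell\notin A+A$. Applying this to $\ell=1$: the decomposition $0=0+0$ is clearly unique (no smaller elements exist in $A$) and $1\notin A+A$ by the first step, so $I_1$ is of type $R$; since $1\leq n-1$, this contributes to $b$, giving $b\geq 1$. Applying it to $\ell=2n-2$: the decomposition $2n-2=(n-1)+(n-1)$ is unique (no larger elements exist in $A$) and $2n-3\notin A+A$, so $I_{2n-2}$ is of type $L$; since $2n-2\geq n$ (the hypothesis forces $n\geq 3$), this contributes to $c$, giving $c\geq 1$.

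Combining these with the already-noted $a,d\geq 1$ (from $I_0$ being of type $L$ and $I_{2n-1}$ of type $R$), the Perron--Frobenius bound $\lambda\geq \tfrac{(a+d)+2\sqrt{bc}}{2}\geq 2$ kicks in exactly as in the proof of Theorem~\ref{thm:dichotomy}, and the conclusion follows. There is no genuine obstacle — the argument is essentially bookkeeping — but the one delicate point to get right is the open/closed interval convention in the type definition, which is what makes the ``forbidden neighbour'' observations $1,2n-3\notin A+A$ translate cleanly into $I_1$ being of type $R$ and $I_{2n-2}$ being of type $L$.
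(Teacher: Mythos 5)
Your proposal is correct and follows essentially the same route as the paper: both show that $1,n-2\notin A$ force $1,2n-3\notin A+A$, hence $I_1$ is of type $R$ and $I_{2n-2}$ of type $L$, so together with $I_0$ and $I_{2n-1}$ the adjacency matrix from the proof of Theorem~\ref{thm:dichotomy} is strictly positive and its Perron--Frobenius eigenvalue is at least $2$. Your write-up merely makes explicit the bookkeeping (the type criterion and the eigenvalue bound) that the paper's proof leaves implicit.
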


\begin{proof}
Assume $1, n-2 \not\in A$.
Using the notation above, we see that $I_0$ is type $L$, $I_1$ is type $R$,
    $I_{2n-2}$ is type $L$ and $I_{2n-1}$ is type $R$.
Hence incidence matrix is strictly positive with integer values.
The maximal eigenvalue of the adjacency matrix associated to the graph
    directed iterated function system is hence bounded below by $2$.
This proves the result.
\end{proof}

\begin{example}
In Example \ref{ex:UK8} we see that $I_0, I_1 = I_{14},I_{15} = L, R$.
This gives a lower bound on the dimension of $\log(2)/\log(8)=\frac13$, although
    it is actually higher in this case.
\end{example}

\section{Large $U_A$}
\label{sec:large UA}
In this section we focus on the question: How large can $\dim_H(U_A)$ be?
Computationally this appears to be bounded above by $\log(2)/\log(3)$.
We show that we can get arbitrarily close to $\log(2)/\log(3)$ (excluding trivial
$n=3^k$) as
    $n$ tends to infinity.
First we need a definition.

\begin{definition}
Let $A$ be an $n$-good set and $M = \begin{bmatrix}a & b \\
    c & d \end{bmatrix}$
    be the adjacency matrix for this graph directed iterated
    function system representing $U_A$.
We will say that $A$ is $n$-very-good if
\begin{itemize}
\item $A$ is $n$-good
\item $1 \not\in A$ and $n-2\not\in A$.
\item Either $a+b =c+d$ or $a+c=b+d$.
\end{itemize}
\end{definition}

\begin{example}
We have that $A = \{0,2,5,7\}$ from Example \ref{ex:UK8} is $8$-very-good.
\end{example}

\begin{thm}
\label{thm:UK}
Let $A = \{a_0, a_1, \dots, a_k\}$ be $n$-very good.
Let
\begin{align*}
A^0 &= \{a_0, a_1, \dots, a_k, a_0+2n, a_1+2n, \dots, a_k+2n\},\\
A^1 &= \{a_0, a_1, \dots, a_k, a_0+2n-1, a_1+2n-1, \dots, a_k+2n-1\},\\
A^2 &= \{a_0, a_1, \dots, a_k, a_0+2n-2, a_1+2n-2, \dots, a_k+2n-2\}.
\end{align*}
Then $A^0$ is $(3n)$-very-good, $A^1$ is $(3n-1)$-very-good and $A^2$ is
    $(3n-2)$-very-good.

Further, if $\dim_H(U_A) = \frac{\log(\lambda)}{\log(n)}$ then
\begin{align*}
\dim_H(U_{A^0}) & = \frac{\log(2\lambda)}{\log(3n)} \\
\dim_H(U_{A^1}) & = \frac{\log(2\lambda-1)}{\log(3n-1)} \\
\dim_H(U_{A^2}) & = \frac{\log(2\lambda-2)}{\log(3n-2)}
\end{align*}
\end{thm}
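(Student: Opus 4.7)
The plan is to handle $A^0, A^1, A^2$ in parallel by writing $A^j = A \cup (A + t_j)$ with shifts $t_0 = 2n$, $t_1 = 2n-1$, $t_2 = 2n-2$, so that
\[
A^j + A^j = (A+A)\cup (A+A+t_j)\cup (A+A+2t_j).
\]
The three translates fit together with a one-integer gap at $2n-1, 4n-1$ (for $j = 0$), exact adjacency (for $j = 1$), or overlap at the single points $2n-2$ and $4n-4$ (for $j = 2$). Since $A+A$ already has gaps of at most $2$ by $n$-goodness, in every case $A^j + A^j$ also has gaps of at most $2$, so $A^j$ is $(3n-j)$-good. The exclusions $1 \notin A^j$ and $(3n-j)-2 \notin A^j$ needed for the very-good property are immediate from $1, n-2 \notin A$.

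The crux is the type analysis. The key observation is that if $c$ lies in the middle translate $A + A + t_j$, both orderings $a + (a' + t_j)$ and $(a + t_j) + a'$ produce representations of $c$ in $A^j \times A^j$, so the multiplicity is at least $2$. Hence no interval $I_\ell$ with $\ell$ inside the middle range can be of type $L$ or $R$, and all type $L$ and $R$ intervals arise from the outer translates $A + A$ and $A + A + 2t_j$. In these outer ranges the multiplicity matches that for $A$, so the uniqueness condition $r^j(\ell) = 1$ reduces to the corresponding condition for $A$; the adjacency condition $\ell - 1 \notin A^j + A^j$ similarly reduces to $\ell - 1 \notin A + A$ up to small boundary corrections between translates. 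Writing $P = a + c$ for the total number of type-$L$ intervals of $A$ and $Q = b + d$ for the total number of type-$R$ intervals, the careful bookkeeping produces
\[
M^0 = \begin{pmatrix} P & Q \\ P & Q \end{pmatrix}, \quad
M^1 = \begin{pmatrix} P & Q-1 \\ P-1 & Q \end{pmatrix}, \quad
M^2 = \begin{pmatrix} P-1 & Q-1 \\ P-1 & Q-1 \end{pmatrix}.
\]
The $-1$s in $M^1$ reflect that, when two translates abut, one boundary interval from each side is absorbed into its neighbour; the double $-1$s in $M^2$ reflect the two multiplicity-$3$ overlap points at $2n-2$ and $4n-4$, each of which eliminates one candidate on the $L$-side and one on the $R$-side.

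Each $M^j$ has equal row sums, so $(1,1)^T$ is a positive eigenvector of the nonnegative matrix, i.e.\ the Perron eigenvector, with eigenvalue equal to the common row sum $P + Q - j$. The identity $P + Q = 2\lambda$ comes straight from the $n$-very-good hypothesis: if $a + b = c + d$ then $(1,1)^T$ is an eigenvector of the original $M$ with Perron eigenvalue $a + b = \tfrac{1}{2}(P + Q)$, and if $a + c = b + d$ then the analogous column-sum argument gives $\lambda = a + c = \tfrac{1}{2}(P + Q)$. Consequently the Perron eigenvalue of $M^j$ is $2\lambda - j$, yielding the dimension formulas $\dim_H U_{A^j} = \log(2\lambda - j)/\log(3n - j)$, while the equal-row-sum structure of each $M^j$ simultaneously verifies that $A^j$ is $(3n-j)$-very-good. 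The main obstacle is the case analysis for $A^2$, where tracking which elements of $L_A$ and $R_A$ are absorbed by the overlaps requires the auxiliary observation that $0, 2n-2 \in L_A$ and $1, 2n-1 \in R_A$ automatically (since $0, n-1 \in A$ and $1, n-2 \notin A$); without these four always-present boundary indices, the loss of exactly one element from each copy of $L_A$ and $R_A$ in the $A^2$ case is not transparent.
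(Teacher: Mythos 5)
Your argument is correct and essentially the same as the paper's: split $A^j+A^j$ into the three translates, observe that the middle block produces only type-$O$ intervals, track the one-interval losses at the junctions in the GDIFS adjacency matrix, and use the very-good hypothesis to identify the new Perron root as $2\lambda-j$. Your bookkeeping via the totals $P=a+c$, $Q=b+d$ and the equal-row-sum eigenvector handles both very-good cases at once and is, if anything, slightly more careful than the paper's proof, which treats only $A^1$, assumes $a+b=c+d$ without loss of generality, and writes the new matrix entries as $a+b$ and $c+d$; either way the spectral radius is $2\lambda-1$ and the stated dimension formulas follow.
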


\begin{example}
Consider $A = \{0,2,4\}$.

One can check that $A$ is $5$-very-good with
\[ I_0,\dots,I_9 = L, R, O, O, O, O, O, O, L, R, \]
adjacency matrix $\begin{bmatrix}1 & 1 \\ 1 & 1 \end{bmatrix}$ 
and $\dim_H U_A=\log(2)/\log(5)$.

We have that
\begin{align*}
A^0 & = \{0,2,4,10,12,14\} \\
A^1 & = \{0,2,4,9,11,13\} \\
A^2 & = \{0,2,5,8,10, 12\}
\end{align*}

One can check that $A^0$ is $15$-very-good with
\[ I_0,\dots,I_{14}    =  L, R, O, O, O, O, O, O, L, R, O, O, O, O, O \]
\[ I_{15},\dots,I_{29} =  O, O, O, O, O, L, R, O, O, O, O, O, O, L, R \]
with adjacency matrix $\begin{bmatrix}2 & 2 \\ 2 & 2 \end{bmatrix}$
and $\dim_H U_A=\log(2\cdot 2)/\log(3\cdot 5) = \log(4)/\log(15)$.

Similarly, $A^1$ is $14$-very-good with
\[ I_0,\dots,I_{13}    =  L, R, O, O, O, O, O, O, L, O, O, O, O, O \]
\[ I_{14},\dots,I_{27} =  O, O, O, O, O, R, O, O, O, O, O, O, L, R \]
with adjacency matrix $\begin{bmatrix}2 & 1 \\ 1 & 2 \end{bmatrix}$
and $\dim_H U_A=\log(2\cdot 2-1)/\log(3\cdot 5-1) = \log(3)/\log(14)$.

Finally, $A^2$ is $13$-very-good with
\[ I_0,\dots,I_{12}    =  L, R, O, O, O, O, O, O, O, O, O, O, O \]
\[ I_{13},\dots,I_{25} =  O, O, O, O, O, O, O, O, O, O, O, L, R \]
with adjacency matrix $\begin{bmatrix}1 & 1 \\ 1 & 1 \end{bmatrix}$
and $\dim_H U_A=\log(2\cdot 2-2)/\log(3\cdot 5-2) = \log(2)/\log(13)$.
\end{example}

\begin{proof}[Proof of Theorem \ref{thm:UK}]
We will do the case of $A^1$ only.
The other cases are similar.
Assume that $A = \{a_0, a_1, \dots, a_k\}$ is $n$-very-good.
We know that $a_1 \neq 1$ and $a_{k-1} \neq n-2$ by assumption.
Let the graph directed iterated function system used to determine
    $U_A$ have incidence matrix $\begin{bmatrix} a& b \\ c & d \end{bmatrix}$.
Consider $A^1 = \{a_0, a_1, \dots, a_k, a_0+2n-1, a_1+2n-1, \dots, a_k+2n-1\}$.
We have that
\[ A^1 + A^1 = (A+ A) \cup (A + A + (2n-1)) \cup (A + A + (4n-2)). \]
The maximal term in $A + A$ is $2n-2$ by construction.
The minimal term in $A + A + (2n-1)$ is $2n-1$ by construction.
This gives us that $I_{2n-1}$ is not type $R$.
We further see that there are $a + b$ intervals $I_\ell$ with 
    $0 \leq \ell \leq 2n-1$ such that $I_\ell$ is of type $L$.
There are $c + d -1$ intervals $I_\ell$ with $0 \leq \ell \leq 2n-1$
    such that $I_\ell$ is of type $R$.

We see that every term in $A + A + (2n-1)$ has at least two representations.
Hence all $I_{2n}, I_{2n+1}, \dots, I_{4n-2}$ are of type $O$.

Similar to before, we have that $I_{4n-1}$ is type $O$.
As before, there are $a + b - 1$ intervals $I_\ell$ with 
    $4n-1 \leq \ell \leq 6n-3$ such that $I_\ell$ is of type $L$.
There are $c + d$ intervals $I_\ell$ with $4n-1 \leq \ell \leq 6n-3$
    such that $I_\ell$ is of type $R$.

This gives us that the incidence matrix for $U_{A^1}$ is
    $\begin{bmatrix} a+b & c + d -1 \\ a + b - 1 & c + d \end{bmatrix}$.
As $1, 3n-4 \not\in A^1$ we see that $A^1$ is $(3n-1)$-very-good.

We next need to compute the dimension of $U_{A^1}$.

Consider the incidence matrix for $U_A$.
We have that either $a +b  = c + d$ or $a + c = b + d$ as $A$ is $n$-very-good.
Assume that $a+b = c +d$.  The other case is similar.
We see that the maximal eigenvalue of the incidence matrix is $a+b$.
This gives us that $\lambda = a + b = c + d$.
We see that the two eigenvalues of 
$\begin{bmatrix} a + b& c +d -1 \\ a+ b -1 & c +d  \end{bmatrix} = 
\begin{bmatrix} \lambda& \lambda -1 \\ \lambda -1 & \lambda  \end{bmatrix}$ 
are $a + b + c +d -1 = 2 \lambda -1$ and $1$.
Hence $\dim_{U_{A^1}} = \log(2 \lambda -1 )/\log(3n-1)$ as required.
\end{proof}

\begin{corollary}
\label{cor:UA upper}
There exists a sequence of $A_n$ which are $n$-very good and
    $\lim \dim_H(U_{A_n}) = \log(2)/\log(3)$.
\end{corollary}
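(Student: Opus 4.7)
The plan is to iterate the $A \mapsto A^0$ construction of Theorem~\ref{thm:UK} starting from a carefully chosen base case, and read off the limiting dimension. The example immediately following Theorem~\ref{thm:UK} has already verified that $A_0 := \{0,2,4\}$ is $5$-very-good, with incidence matrix $\bigl[\begin{smallmatrix}1 & 1\\ 1 & 1\end{smallmatrix}\bigr]$, Perron eigenvalue $\lambda_0 = 2$, and $\dim_H(U_{A_0}) = \log 2/\log 5$. This will serve as the seed.

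Inductively I would define $A_{k+1} := (A_k)^0$ and $n_{k+1} := 3 n_k$, starting from $n_0 = 5$. By Theorem~\ref{thm:UK}, the $A^0$ operation preserves the very-good property while tripling $n$ and doubling the Perron eigenvalue. A straightforward induction therefore gives that $A_k$ is $n_k$-very-good with $n_k = 5\cdot 3^k$ and $\lambda_k = 2^{k+1}$, so
$$
\dim_H(U_{A_k}) \;=\; \frac{\log \lambda_k}{\log n_k} \;=\; \frac{(k+1)\log 2}{\log 5 + k\log 3}.
$$
Letting $k \to \infty$ yields $\dim_H(U_{A_k}) \to \log 2/\log 3$, which is exactly the claim of the corollary along the subsequence $n = 5\cdot 3^k$.

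There is no serious obstacle here: Theorem~\ref{thm:UK} does essentially all of the work, and what remains is only the elementary limit computation above. If one wanted to strengthen the statement to cover every sufficiently large $n$ (as hinted by the introductory prose), one would mix in the $A^1$ and $A^2$ operations, which take $n$ to $3n-1$ and $3n-2$ respectively; a short inductive argument then shows that the set of $n$ reachable from the seed $n_0 = 5$ eventually covers all integers from some point on, and on each such $n$ the dimension formula in Theorem~\ref{thm:UK} still forces $\dim_H(U_A) \to \log 2/\log 3$ as $n \to \infty$. For the corollary as stated, however, the single iteration of $A^0$ suffices.
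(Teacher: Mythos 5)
Your main argument is correct and proves the corollary as literally stated: the example following Theorem~\ref{thm:UK} certifies the seed $\{0,2,4\}$ as $5$-very-good with $\lambda=2$, Theorem~\ref{thm:UK} shows the operation $A\mapsto A^0$ preserves very-goodness while sending $n\mapsto 3n$ and $\lambda\mapsto 2\lambda$, and the limit $\frac{(k+1)\log 2}{\log 5+k\log 3}\to\frac{\log 2}{\log 3}$ is elementary. This is a streamlined special case of the paper's proof, which uses the same engine (Theorem~\ref{thm:UK}) but starts from a whole table of seeds for every $n_0\in[9,27]$, mixes all three operations $A^0,A^1,A^2$, and carries out a uniform estimate on the error terms $\log(1-x)$, $\log(1-y)$ (bounded using $d\geq 0.442144$ and $n_0\geq 9$) so as to produce an $n$-very-good set for \emph{every} $n\geq 9$ with dimension tending to $\log 2/\log 3$; that stronger conclusion is what the introduction advertises ($\dim_H(U_A)\geq \log 2/\log 3-\varepsilon$ for all sufficiently large $n$). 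Your route buys simplicity (no error estimates, no table) at the cost of only producing the subsequence $n=5\cdot 3^k$.

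One side remark in your proposal is wrong and worth flagging: the set of $n$ reachable from the single seed $n_0=5$ by iterating $n\mapsto 3n,\,3n-1,\,3n-2$ does \emph{not} eventually contain all large integers. If level $t$ of the iteration covers the block of consecutive integers $[a_t,b_t]$, then level $t+1$ covers $[3a_t-2,\,3b_t]$; starting from $[13,15]$ one gets $a_t=1+4\cdot 3^t$ and $b_t=5\cdot 3^t$, so consecutive blocks never merge ($3a_t-2>b_t+1$) and the reachable set is a union of bands of multiplicative width about $5/4$, missing a positive proportion of integers at every scale. This is precisely why the paper's proof needs seeds spanning a full multiplicative interval $[3^{N-1},3^N]$ (Table~\ref{tab:UK}): only then does applying $A^0,A^1,A^2$ reach every $n$ in the next interval $[3^N,3^{N+1}]$, and inductively every $n\geq 9$. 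So to upgrade your argument to the ``all sufficiently large $n$'' version you would need such a table (or some other family of seeds covering a factor-$3$ range), not just the single seed $n_0=5$.
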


\begin{proof}
Let $N = 3$.
We first note for $n_0 \in [9,27] = [3^{N-1},3^N]$ that there exists an
    $n$-very-good set with $\dim_H(U_{A_n}) \geq d$ where
    $d = 0.442144$.
See Table \ref{tab:UK}.

\begin{table}
\begin{tabular}{lll}
$n$ & $A$ & $\dim_H(U_A)$ \\
\hline
9 & [0, 2, 6, 8] & .6309297534\\
10 & [0, 2, 6, 7, 9] & .4771212549\\
11 & [0, 2, 4, 8, 10] & .4581569101\\
12 & [0, 2, 3, 5, 9, 11] & .4421141088\\
13 & [0, 2, 6, 10, 12] & .5404763090\\
14 & [0, 2, 6, 7, 11, 13] & .5252990700\\
15 & [0, 2, 6, 8, 12, 14] & .5119160496\\
16 & [0, 2, 6, 9, 13, 15] & .5000000000\\
17 & [0, 2, 6, 10, 14, 16] & .4893010842\\
18 & [0, 2, 6, 7, 11, 15, 17] & .4796249332\\
19 & [0, 2, 4, 10, 12, 16, 18] & .5466025696\\
20 & [0, 2, 3, 5, 12, 14, 17, 19] & .4627564262\\
21 & [0, 2, 3, 5, 12, 14, 18, 20] & .5286339466\\
22 & [0, 2, 5, 7, 13, 15, 19, 21] & .5206780355\\
23 & [0, 2, 6, 8, 14, 16, 20, 22] & .5714440358\\
24 & [0, 2, 6, 8, 15, 17, 21, 23] & .5637914160\\
25 & [0, 2, 6, 8, 16, 18, 22, 24] & .5566413765\\
26 & [0, 2, 6, 8, 17, 19, 23, 25] & .5972536806\\
27 & [0, 2, 6, 8, 18, 20, 24, 26] & .6309297534
\end{tabular}
\caption{Table of $n$-very-good sets $A$ with $\dim_H(U_A)$}
\label{tab:UK}
\end{table}

By Theorem \ref{thm:UK},
    for all $n_1 \in [3^N, 3^{N+1}]$ there exists
    an $n_0 \in [3^{N-1}, 3^N]$ and a $k_1 \in \{0,1,2\}$ such that
    $A_{n_1} = (A_{n_0})^{k_1}$ is $n_1$-very-good.
In general, for all $n_t \in [3^{N+t-1}, 3^{N+t}]$ there exists
    an $n_0 \in [3^{N-1}, 3^N]$ and
    a sequence $k_1, k_2, \dots, k_t \in \{0,1,2\}$ such that
    \[ A_{n_t} = \left(\dots\left(\left(A_{n_0}\right)^{k_1}\right)^{k_2}\dots\right)^{k_t} \]
    is $n_t$-very-good.

Let $d = \dim_H(U_{A_{n_0}})$.
Hence $n^d = \lambda$.
For $k_1 \in \{0,1,2\}$ we have that
\begin{align*}
\dim_H(U_{A_{n_0}^{k_1}})
    & = \frac{\log(2 n^d - k_1)}{\log(3 n-k_1)} \\
    & = \frac{\log(2) + d \log(n) + \log(1 - \frac{k_1}{2n^d})}
             {\log(3) + \log(n) + \log(1 - \frac{k_1}{3n})} \\
\end{align*}
And further, by induction,
\begin{align*}
\dim_H(U_{(\dots((A_{n_0}^{k_1})^{k_2})\dots)^{k_t}})
    & = \frac{\log(2(\dots (2 (2 n^d - k_1)-k_2)\dots)-k_t)}
             {\log(3(\dots (3 (3 n - k_1)-k_2)\dots)-k_t)} \\
    & = \frac{\log(2^t n^d - 2^{t-1} k_1 - 2^{t_2} k_2- \dots- k_t)}
             {\log(3^t n - 3^{t-1} k_1 - 3^{t_2} k_2-\dots - k_t)} \\
    & = \frac{\log(2^t n^d (1- \frac{k_1}{2n^d} - \frac{k_2}{2^2n^d}- \dots- \frac{k_t}{2^t n^d})}
            {\log(3^t n (1- \frac{k_1}{3n} - \frac{k_2}{3^2n}- \dots- \frac{k_t}{3^t n})}
\end{align*}

Denote
    \[ 1-x = 1- \frac{k_1}{2n^d} - \frac{k_2}{2^2n^d}- \dots- \frac{k_t}{2^t n^d} \]
and
    \[ 1-y = 1- \frac{k_1}{3n} - \frac{k_2}{3^2n}- \dots- \frac{k_t}{3^t n}.\]
We see that
\[
|x| \leq \sum_{i=1}^t \frac{k_i}{n^d\cdot 2^i}
  \leq \sum_{i=1}^\infty \frac{2}{9^{0.442144}\cdot 2^i}
   \leq 0.75708
\]
and
\[
|y| \leq \sum_{i=1}^t \frac{k_i}{n\cdot 3^i}
  \leq \sum_{i=1}^\infty \frac{2}{9\cdot 3^i}
   \leq \frac{1}{9}
\]
respectively.

Hence $\log(1-x)$ and $\log(1-y)$ are well defined and bounded.

This gives us that
\[
\dim_H(U_{(\dots((A_{n_0}^{k_1})^{k_2})\dots)^{k_t}})
    = \frac{t \log 2 + d \log n + \log (1-x) }
            {t \log 3 + \log n + \log (1-y) }.
\]
As $t \to \infty$ we have the dimension goes to $\log(2) / \log(3)$, as
    required.
\end{proof}

\begin{example}
Consider $n = 1000000$.
We note that $A_{1000000} = (A_{333334})^2$, hence if we can find a
    very-good $A$ for $A_{333334}$ we can find a very-good $A$
    for $A_{1000000}$.
This technique can be applied recursively.
See Table~\ref{tab:1000000} for full details.
\begin{table}
\begin{tabular}{ll}
$A_i$ & $\dim_H(U_A)$ \\
\hline
$A_{17}$ & $\frac{\log(4) }{\log(17)} \approx .4894$\\
$A_{51} = (A_{17})^0$ & $\frac{\log(8) }{\log(51)} \approx .5289$\\
$A_{153} = (A_{51})^0$ & $\frac{\log(16) }{\log(153)} \approx .5512$\\
$A_{458} = (A_{153})^1$ & $\frac{\log(31) }{\log(458)} \approx .5605$\\
$A_{1372} = (A_{458})^2$ & $\frac{\log(60) }{\log(1372)} \approx .5667$\\
$A_{4116} = (A_{1372})^0$ & $\frac{\log(120) }{\log(4116)} \approx .5752$\\
$A_{12346} = (A_{4116})^2$ & $\frac{\log(238) }{\log(12346)} \approx .5808$\\
$A_{37038} = (A_{12346})^0$ & $\frac{\log(476) }{\log(37038)} \approx .5860$\\
$A_{111112} = (A_{37038})^2$ & $\frac{\log(950) }{\log(111112)} \approx .5900$\\
$A_{333334} = (A_{111112})^2$ & $\frac{\log(1898) }{\log(333334)} \approx .5935$\\
$A_{1000000} = (A_{333334})^2$ & $\frac{\log(3794) }{\log(1000000)} \approx .5965$
\end{tabular}
\caption{Construction of good $A_{1000000}$}
\label{tab:1000000}
\end{table}
\end{example}

\begin{remark}
This shows that $\limsup_n \max_A \dim_H(U_A) \geq \log(2)/\log(3)$.
This does not show equality, as we only know that $\dim_H(U_A)$ is
    bounded above by $1$.
\end{remark}

\begin{remark}
\label{rem:UA upper}
Extensive computations have been done to attempt to find an $A$ with
    $0, n-1 \in A$, $A$ $n$-good, and $\dim_H(U_A) > \log(2)/\log(3)$.
This search has been unsuccessful.
For each $n$ in Figure~\ref{fig:UA upper} we have given the largest dimension
    known for $\dim_H(U_A)$.
For reference, we have put a horizontal line at $\log(2)/\log(3)$.
It is worth noting that this search is not exhaustive (as the number of sets
    are too large).
A complete data set for $3 \leq n \leq 1000$ can be found at \cite{Homepage}.

It is also worth noting that if this search is repeated for
    {\em all} $0, n-1 \in A \subset \{0,1,\dots, n-1\}$,
    including $A$ where $\dim_H(C_A + C_A) < 1$,
    we still cannot find an $A$ such that $\dim_H(U_A) > \log(2)/\log(3)$.
\end{remark}

\begin{figure}
\includegraphics[scale=0.7, angle=270]{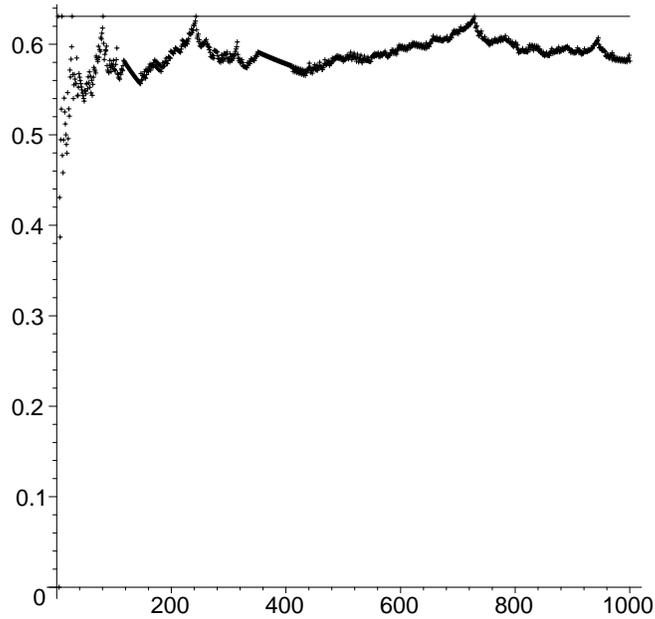}
\caption{Largest known $\dim_H(U_A)$ for $0, n-1 \in A$, with $A$ $n$-good.}
\label{fig:UA upper}
\end{figure}

\section{Possible structures for $C_A + C_A$}
\label{sec:structure}

If $A$ is $n$-good, by definition, $C_A + C_A = [0,2]$.
In this section we consider the structure of $C_A + C_A$ when $A$ is not
    $n$-good.

When $0, n-1 \in A \subset \{0,1,\dots,n-1\}$ we have examples where
   $C_A + C_A = [0,2]$.
Further, for $A = \{0, n-1\}$ and $n \geq 4$ we have that $C_A + C_A$
    is a Cantor set of dimension $\log(3)/\log(n)$.
Below is an example of an $A$ such that $C_A + C_A$ is not $n$-good,
    but where $C_A + C_A$ contains an interval, and hence has dimension $1$.

\begin{example}
\label{ex:v2}
Let $A = \{0,1,4\}$.
We observe that $C_A + C_A \neq [0,2]$ for the simple reason that $(7/5, 8/5) \cap (C_A + C_A) = \varnothing$.
This comes directly from noting that $C_A \subset [0,2]$ and
    $(7/5, 8/5) \cap \left(\cup_{a, a' \in A} S_{a+a'}([0,2])\right) = \varnothing$.
Hence $A$ is not $5$-good.
We also observe that $[1, 5/4] \subset C_A + C_A$.
This can be shown using techniques in \cite{NgaiWang01} to determine
    the structure of $C_{A+A, n}$.
Hence $C_A + C_A$ has dimension $1$.

We will say that $I = [a,b]$ is an {\em interval} in $C_A + C_A$ is $I \subset C_A + C_A$.
We will say that $I$ is a {\em maximal interval} if $I$ is an interval of $C_A + C_A$ and $I$ is not
    the proper subset of any other interval in $C_A + C_A$.
For example, $[1, 5/4]$ is a maximal interval.

We will similarly say that $G = (a,b)$ is a {\em gap} in $C_A +C_A$ if $G \cap (C_A + C_A) = \varnothing$.
We will say that $G$ is a {\em maximal gap} if $G$ is an gap of $C_A + C_A$ and $G$ is not
    the proper subset of any other gap in $C_A + C_A$.
For example, $(7/5, 8/5)$ is a maximal gap.

We will say that $g \in C_A + C_A$ is a point in $C_A + C_A$ if both $(g-\epsilon, g)$ and $(g, g+ \epsilon)$ have
    non-trivial intersection with $C_A + C_A$ and non-trivial intersection with the compliment of
    $C_A + C_A$.

In this case it can be shown that $C_A + C_A$ is composed of a countably infinite number of maximal gaps,
    a countably infinite number of maximal intervals and an uncountable number of points.
\end{example}

This is in fact a general phenomenon.
\begin{thm} \label{thm:structure}
Let $0, n-1 \in A \subset \{0,1,\dots, n-1\}$.
Define $C_A$ as the IFS generated by
\[ \{ x \mapsto (x + a) / n: a \in A \}. \]
Then one of the following is true.
\begin{enumerate}
\item $C_A + C_A = [0, 2]$. \label{case:1}
\item $C_A + C_A$ is a Cantor set.  (I.e. every point is a boundary point and no isolated points.) \label{case:2}
\item $C_A + C_A$ contains a countably infinite number of maximal intervals
      and a countably infinite number of maximal gaps.
      Furthermore, the set of points in $C_A + C_A$ has positive dimension.
      \label{case:3}
\end{enumerate}
\end{thm}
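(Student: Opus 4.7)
The plan is to view $K := C_A + C_A = C_{B,n}$, where $B := A + A$, as the attractor of the IFS $\{S_b(x) = (x+b)/n : b \in B\}$, so that $K \subset [0,2]$ and $\{0, 2\} \subset K$. I would first verify that $K$ is perfect: given any expansion $x = \sum_{i \geq 1} b_i/n^i \in K$, replacing $b_k$ by any $b_k' \in B \setminus \{b_k\}$ (which exists since $|B| \geq 2$) yields a point of $K$ at distance $|b_k - b_k'|/n^k$ from $x$, so $x$ is not isolated. Then I would invoke the standard topological trichotomy for compact subsets of $\mathbb{R}$: if $K$ is connected, then as a connected subset of $[0,2]$ containing $0$ and $2$ we have $K = [0,2]$ (case~\ref{case:1}); if $K$ is totally disconnected, then being also compact and perfect it is a Cantor set (case~\ref{case:2}); otherwise $K$ has a non-degenerate connected component (a maximal closed interval) and a non-empty open complement (a maximal gap), which I will argue is case~\ref{case:3}.

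In the remaining case I would first rule out $n = 2$, since then $0, 1 \in A \subset \{0,1\}$ forces $A = \{0,1\}$, $B = \{0,1,2\}$, and $K = [0,2]$. For $n \geq 3$ the key localization is the identity
\[K \cap (0, 2/n^k) = S_0^{\,k-1}\bigl(K \cap (0, 2/n)\bigr) \qquad (k \geq 2),\]
valid because any $S_b$ with $b \geq 1$ has image $[b/n, (b+2)/n] \subset [1/n, 2]$, which is disjoint from $(0, 2/n^k)$ once $n^{k-1} > 2$; the symmetric identity using $S_{2n-2}^{k-1}$ holds near $2$. Combining this with the hypothesis that $K$ has at least one gap, I would argue that $K$ must have a maximal gap in one of the two windows $(0, 2/n)$ or $(2-2/n, 2)$: otherwise iterating the localization forces $[0, 2/n] \cup [2-2/n, 2] \subset K$ and, applying the same argument inside each cylinder of the IFS, progressively fills more and more of $[0, 2]$, eventually contradicting the existence of any gap at all. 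The $k$-fold scalings of such a gap under $S_0$ (resp.\ $S_{2n-2}$), for $k \geq 2$, are then pairwise distinct maximal gaps of $K$ of strictly decreasing size. Since $K$ is perfect, its connected components are non-degenerate closed intervals and the number of components equals one plus the number of maximal gaps, so the maximal gaps and maximal intervals of $K$ are both countably infinite.

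The main obstacle is showing that the set $P$ of two-sided boundary points of $K$ has positive Hausdorff dimension. My plan is to exhibit a self-similar Cantor subset $\Sigma \subset P$ (up to a countable set) of positive dimension. Fix a maximal gap $G = (a, b)$ of $K$; since $K$ is perfect, the adjacent components $I_L$ (ending at $a$) and $I_R$ (starting at $b$) are non-degenerate closed intervals. Pick a point $p_1$ well inside $I_L$ and a point $p_2$ well inside $I_R$, and let $u, v \in B^k$ be length-$k$ prefixes of their $B$-expansions for a common $k$ chosen large enough that the cylinders $S_u([0,2])$ and $S_v([0,2])$ lie strictly inside $I_L$ and $I_R$ respectively. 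The two-map sub-IFS $\{S_u, S_v\}$ then satisfies the strong separation condition (its cylinders are separated by a region containing $G \subset K^c$), and its attractor is a Cantor set $\Sigma \subset K$ of Hausdorff dimension $\log 2 / (k \log n) > 0$. Every $\sigma \in \Sigma$ whose $\{u, v\}$-address contains both letters infinitely often is approached on both sides by points of $\Sigma \subset K$ and, at arbitrarily fine scales, by scaled copies of $G \subset K^c$ on both sides; such $\sigma$ exhaust $\Sigma$ up to a countable set of cylinder endpoints, so $\dim_H P \geq \log 2 / (k \log n) > 0$, completing case~\ref{case:3}.
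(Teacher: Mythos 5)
The first half of your argument (perfectness of $K=C_{B,n}$, the connected / totally disconnected / mixed trichotomy, and the production of infinitely many maximal gaps by locating a gap near $0$ or $2$ and iterating $S_0$ or $S_{2n-2}$) is sound and close to the paper's, although your contrapositive ``filling'' justification for finding a gap in $(0,2/n)\cup(2-2/n,2)$ is vaguer than needed: one application of $S_0$ or $S_{2n-2}$ to a gap shrunk so as to avoid $1$ already lands a gap in $(0,1/n)$ or $(2-1/n,2)$, where only one branch of the IFS covers. The genuine problems begin where you count maximal intervals and, above all, where you bound $\dim_H P$. The claim ``since $K$ is perfect, its connected components are non-degenerate closed intervals, and the number of components is one plus the number of maximal gaps'' is false (the Cantor set is perfect with every component a singleton), and it contradicts the very statement you are proving: in case (3) the set $P$ of points lying in no maximal interval is uncountable, so most components are degenerate. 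The interval count is easily repaired (push a maximal interval toward $2$ with $S_{2n-2}$; its images are non-degenerate subintervals of $K$ interleaved with the iterated gaps, as in the paper), but the same false premise then derails the dimension bound fatally.

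Your Cantor set $\Sigma$ is built from two cylinders $S_u([0,2])$, $S_v([0,2])$ chosen to lie strictly inside components $I_L, I_R\subset K$. Consequently every point of $\Sigma$ is an interior point of a maximal interval of $K$, i.e.\ $\Sigma\cap P=\varnothing$, so $\dim_H\Sigma>0$ says nothing about $P$. The step that fails is ``at arbitrarily fine scales, scaled copies of $G\subset K^c$ on both sides'': for a nonempty word $w$ over $\{u,v\}$ the set $S_w(G)$ lies inside $S_{w_1}([0,2])\subset I_L\cup I_R\subset K$, so it is not disjoint from $K$ at all. More generally, because the IFS $\{S_b:b\in B\}$ has overlaps, the image of a gap of $K$ under a cylinder map need not be a gap of $K$; it is one only in regions covered by a unique branch and from one side only. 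That is precisely the mechanism the paper exploits: it classifies the level-one intervals $I_\ell$ as type $L$, $R$ or $O$, observes that the existence of a gap forces a type-$R$ interval to its left and a type-$L$ interval to its right in addition to $I_0$ and $I_{2n-1}$, and assembles from the corresponding maps ($S_0$, $S_{\ell'}$, $S_{\ell''}$, $S_{2n-2}$) a graph-directed IFS whose attractor coincides with $P$ up to a countable set and whose incidence matrix has column sums at least $2$, giving $\dim_H P\ge\log 2/\log n$. To rescue your approach you would have to choose the two separated maps so that their images are covered by no other branch of the full IFS (so the separating gap genuinely propagates as a gap of $K$), which in essence reproduces the paper's type-$L$/$R$ construction rather than an arbitrary pair of cylinders sitting inside intervals of $K$.
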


\begin{proof}
We have seen examples of all three of these possibilities.
Hence, it suffices to show that if neither Cases \eqref{case:1} or \eqref{case:2}
    hold, then Case \eqref{case:3} holds.

Let $A = \{a_0 < a_1 < … < a_k\}$ where $a_0 = 0$ and $a_k = n-1$.
Let $B = A + A = \{b_0 < b_1 < .. < b_j\}$ where $b_0 = 0$ and $b_j = 2n-2$.

We see that $C_A + C_A$ is the attractor of the IFS \[ \{S_i(x) =  x/n + b_i/n\}_{i=0}^{j}.\]

Assume that $C_A + C_A$ is not an interval and is not a Cantor set.
Then $C_A + C_A$ will contain an interval (say $[a,b]$) and will contain a gap (say $(c,d)$).

We can assume without loss of generality that either $1 < c < d$ or $c < d < 1$ by shrinking the gap if necessary.
Assume without further loss of generality that $1 < c < d$, as the other argument is symmetric.

We see that
\begin{itemize}
\item $S_{2n-2} \circ S_{2n-2} \circ \dots \circ S_{2n-2} (c,d) = S_{2n-2}^{[m]}((c,d))$ is a gap in $C_A + C_A$.
\item $S_{2n-2} \circ S_{2n-2} \circ \dots \circ S_{2n-2}([a,b]) = S_{2n-2}^{[m]}([a,b])$ is an interval in $C_A + C_A$.
\end{itemize}

This shows that we have a countably infinite sequence of intervals and a countably infinite sequence of gaps both approaching 2.
These two sequences interweave.
Hence we have at least a countably infinite  number of maximal intervals and a countably infinite number of maximal gaps.

For any $k \in \mathbb{N}$, $k \geq 2$ we see that we can have at most
    $2k$ maximal intervals of length at least $1/k$.
As such, we can enumerate the maximal intervals, and hence the number of
    maximal intervals is at most countable.
A similar result holds for maximal gaps.
This proves that we have a countably infinite number of maximal intervals and a countably infinite number of 
    maximal gaps.

Note that 2 is not contained in an interval, nor it is the boundary of a gap (from the left).
Let $\cup_{k} J_k$ be the disjoint union of all maximal intervals in
    $C_A + C_A$.
From above, we have that $2 \in (C_A + C_A) \setminus \cup_k J_k$.
Let $P = (C_A + C_A) \setminus \cup_k J_k$.

We will next show that $\dim_H(P) \geq \log(2)/\log(n)$.

Consider $C_{A+A,n}$.
As in the proof of Theorem \ref{thm:dichotomy} we will subdivide $[0,2]$
    into $2n$ intervals $I_\ell = [\ell/n, (\ell+1)/n]$ of size $1/n$.
In Theorem \ref{thm:dichotomy} we say an interval was of type $L$ if
    there existed a unique pair $(a_i, a_j) \in A \times A$ such that
    $I_\ell = S_{a_i + a_j}([0,1])$, and that for all pairs
    $(a_i', a_j')$ we have $I_\ell \cap S_{a_i' + a_j'}((1,2)) =
    \varnothing$.

Here we are concerned with $C_{A+A,n}$ instead of $C_{A,n} + C_{A,n}$, so
    we modify this slightly.
Here we say that an interval is of type $L$ if there exist an $a \in A+A$
    such that $I_\ell =t S_{a,n}([0,1])$ and that for all $a' \in A+A$
    we have $I_\ell \cap S_{a',n}((1,2)) = \varnothing$.
We define an interval to be of type $R$ in the analogous way.
We denote all other intervals to be of type $O$.
If an interval is of type $O$ then either there exists $a, a' \in A + A$ 
    with $I_\ell = S_a([0,1]) = S_{a'}([1,2])$ or for all $a, a' \in A +A$
    we have $I_\ell \cap S_a((0,1)) = I_\ell S_{a'}((1,2)) = \varnothing$.

Proceeding as before, we see that $I_0$ is of type $L$, and $I_{2n-1}$ is of
    type $R$.
We see that as $C_{A+A,n}$ contains a gap, then there exists an $I_\ell$
    which is covered by no $S_{a,n}([0,2])$.
If we consider the interval $I_{\ell-1}$ it will be one of two types.
It will either be of type $O$ as it is covered by no $S_{a,n}([0,2])$,
    or it will be of type $R$.
If it is of type $O$, then we can repeat this observation on $I_{\ell-2}$.
Repeating this observation as necessary, we see that there exists an
    $\ell' < \ell$ such that $I_{\ell'}$ is of type $R$.
Similarly there exists an $\ell'' > \ell$ such that $I_{\ell''}$ is of type
    $L$.
This gives use that $I_0$ and $I_{\ell''}$ are of type $L$ and
    $I_{\ell'}$ and $I_{2n-1}$ are of type $R$.

As before, we can construct a graph directed iterated function system
    using these four maps.
We have three possible cases.
Either $\ell' < \ell'' \leq n-1$, or
       $\ell' \leq n-1 < n \leq  \ell''$ or
       $n \leq \ell' <  \ell''$.
We will give the first one only.  The rest are analogous.
In the case $\ell' < \ell'' \leq n-1$ we have
\begin{align*}
\mathcal{L} & = S_0(\mathcal{L}) \cup
              S_{\ell'}(\mathcal{R}) \cup
              S_{\ell''}(\mathcal{L}) \\
\mathcal{R} & = S_{2n-2}(\mathcal{R})
\end{align*}
See Figure \ref{fig:GDIFS} for a graphical representation.

\begin{figure}[H]
\begin{center}
\begin{tikzpicture}[->,>=stealth',shorten >=1pt,auto,node distance=2.8cm, semithick]
  \tikzstyle{every state}=[fill=red,draw=none,text=white]

  \node[state]         (A)              {$\mathcal{L}$};
  \node[state]         (B) [right of=A] {$\mathcal{R}$};

  \path (A) edge [loop left]  node {$S_0$}         (A)
            edge [loop above]  node {$S_{\ell''}$} (A)
        (B) edge node {$S_{\ell'}$} (A)
            edge [loop right]  node {$S_{2n-2}$} (B);
\end{tikzpicture}
\end{center}
\caption{GDIFS Diagram}
\label{fig:GDIFS}
\end{figure}
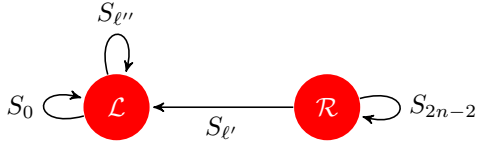

We see that $P \subset \mathcal{L} \cup \mathcal{R}$.
As before, there are at most a countable number of points in
    $\left(\mathcal{L} \cup \mathcal{R}\right) \setminus P$.
Hence $\dim_{H}(\mathcal{L}) = \dim_H(P)$.

Consider the adjacency matrix $M$ for the graph directed iterated function system.
We see that each column sum is at least 2, as there are at least two intervals
    of type $L$ and at least two intervals of type $R$.
Hence this adjacency matrix has an eigenvalue of at least $2$.
This implies that the attractor of the graph directed iterated function system
    has dimension at least $\log(2) / \log(n)$.
\end{proof}

It is worth noting that if $A$ is allowed to be an arbitrary set of integers then it is possible to have $C_A + C_A$ with a different structure.

\begin{example}
\label{ex:v4}
Let $n = 5$ and $A = \{0,1,7,8\}$.
We see that $B = A + A = \{0,1,2,7,8,9,14,15,16\}$.
In this case we can show that
    $C_{A,n} + C_{A,n} = [0,6/5] \cup [7/5, 13/5] \cup [14/5, 4]$.
\end{example}

It is unclear what the full range of possible structures of $C_{A,n} + C_{A,n}$
    when $A$ is not restricted to subsets of $\{0, 1, \dots, n-1\}$.

\section{Some comments on simplifying assumptions}
\label{sec:comments}

We made a number of simplifying assumptions in the initial definition of
   $n$-good.
The first was that $0, n-1 \in A \subset \{0,1,\dots,n-1\}$.
The second was that $C_A + C_A$ was an interval, instead of simply
    containing an interval.
In this section we consider how the results would be modified if these
    simplifying assumptions were relaxed.

\begin{definition}
{\ }
\begin{itemize}
\item
We say that $0,n-1 \in A \subset \{0,1,\dots,n-1\}$ is {\em $n$-good (v1)} if $C_{A,n} + C_{A,n} = [0,2]$.
\item
We say that $A \subset \mathbb{Z}$ is {\em $n$-good (v2)} if $C_{A,n} + C_{A,n}$ is an
    interval.
\item
We say that $0,n-1 \in A \subset \{0,1,\dots,n-1\}$ is {\em $n$-good (v3)} if $C_{A,n} + C_{A,n}$ contains an interval.
\item
We say that $A \subset \mathbb{Z}$ is {\em $n$-good (v4)} if $C_{A,n} + C_{A,n}$ contains an interval.
\end{itemize}
\end{definition}

Consider $A \subset \mathbb{Z}$.
It is worth observing that if we take a linear translate of $A$ then this results in a linear translate of $C_{A,n}$ and
     a linear translate of $C_{A,n} + C_{A,n}$.
As such, all answers to the structure questions remain the same under linear translates of $A$.
Hence, we will assume without loss of generality that $\min(A) = 0$.

We see from Example \ref{ex:v2} that the $A = \{0,1,4\}$ is $5$-good (v3) and (v4).
We see from Example~\ref{ex:v4} that the $A = \{0,1,7,8\}$ is $5$-good (v4).

We make a few comments upon the questions examined within this paper,
    with respect to these variations.
\begin{itemize}
\item How small can $A$ be if $A$ is $n$-good?
\begin{itemize}
\item For v2, v3, v4, we still have that $\mathcal{O}(\sqrt{n})$ is an
    attainable lower bound on the size of $A$.
    This is because the proof of Theorem~\ref{thm:lower bound good} uses
    the fact that $\dim_H(C_A + C_A) = 1$ (and hence $C_A + C_A$ contains an interval),
    and not that $C_A + C_A = [0,2]$.
    Theorem~\ref{thm:upper bound good} is an existence proof, and the example
    given is also an example in v2, v3 and v4.
\end{itemize}
\item How large can $A$ be if $A$ is not $n$-good?
\begin{itemize}
\item Unlike Remark \ref{rem:not n good}, for v2,
    we can have $A$ arbitrarily large. For any $m$ we can choose
    $k$ sufficiently large (with respect to $n$ and $m$)
    so that $A = \{0,1,2,\dots, m-1,  m, k\}$ is not $n$-good (v2).
\item For v3 consider $n = 2m$ or $n = 2m+1$.
    Then $A = \{0,1,2,\dots,m-2, n-1\}$ is not $n$-good and
    $\#A = \mathcal{O}(n)$.
\item For v4 let $n \geq 4$.
      Let $A = \{n^i\}_{i=0}^m$.
      Then $A$ is not $n$-good, regardless of the size of $m$.
      To see this, note that 
      \begin{align*}
       C_{A,n} & = \left\{\sum \frac{n^{a_i}}{n^i}: a_i \in \{0,1,\dots m\} \right\}  \\
             & = \left\{\sum \frac{1}{n^{i-a_i}}: a_i \in \{0,1,\dots m\} \right\}.
      \end{align*}
      Hence elements in $C_{A,n} + C_{A,n}$ can be written as
      \[ \sum_{i=1}^\infty \frac{1}{n^{i-a_i}} + \frac{1}{n^{i-b_i}}  \]
      with $a_i, b_i \in \{0,1,\dots,m\}$.
      In this case we cannot have more than $2 (m+1)$ consecutive $3$s
          in the base $n$ representation of this number.
      As numbers with $2(m+1)+1$ consecutive $3$ in their base $n$ expansion
          are dense in $\mathbb{R}$,
          this proves that the set contains no intervals.

      If instead $n = 3$ it is easy to show that $C_A + C_A$ is always an 
          interval, so long as $\#A \geq 2$.
\end{itemize}
\item How small can $A$ be with $A$ being $n$-good and $U_A = \{0,2\}$?
\begin{itemize}
\item The proof of Corollary~\ref{cor:lower bound small UA} is constructive,
      and depends only on $\dim_H(C_A + C_A) = 1$ (and hence $C_A + C_A$ contains an interval).
      Hence this example still holds for v2, v3 and v4.
      The lower bound given in Theorem~\ref{thm:lower bound good} is valid for
      all variations.
\end{itemize}
\item How large can $A$ be with $A$ being $n$-good and $\dim_H(U_A) > 0$?
\begin{itemize}
\item For v2 and v4, and $n \geq 6$ we have that $A$ can be arbitrarily large.
      To see this, let $t \geq 1$.
      Let
\begin{align*}
A  = & \{0\} \cup \{t(n-1)\} \cup \\
     & \{2t, 2t+1, \dots, t(n-1)-2t-1, t(n-1)-2t\}.
\end{align*}
      We have that
\begin{align*}
A + A = & \{0\} \cup \{2t (n-1)\} \cup  \\
        & \{2t, 2t+1, \dots, 2t(n-1)-2t-1, 2t(n-1)-2t\}.
\end{align*}
      This gives us that $C_A + C_A = [0, 2t]$, and hence $A$ is $n$ good for both v2 and v4.
      We further see that both $0$ and $t(2n-2)$ have unique representations in $A+A$.
      Further, the maps $x \mapsto x / n$ and $x \mapsto x/n + t(2n-2)/n$ acting on $(0,2t)$
          are disjoint from the action of all other maps.
      Hence any infinite non-trivial composition of these two maps results in a point in $U_A$.
      Hence $\dim_H(U_A) > 0$.
      As $A = \mathcal{O}(t (n-5))$ and $n \geq 6$ the result follows.
      It is unclear what happens in the case when $3 \leq n \leq 5$.
\item The result of Corollary \ref{cor:upper bound large UA} is still valid
      for v3, and hence the same example holds.
\end{itemize}
\item Does there exist $\{0,2\} \subsetneq U_A $ with
    $\dim_H(U_H) = 0$.
\begin{itemize}
\item We conjecture this is not true for v3 and v4 as stated.  There might be a
    modification that is true.
\item We conjecture that this is true for v2, (with the obvious modification).
\end{itemize}
\item How large can we make $\dim_H(U_A)$ if $A$ is $n$-good?
\begin{itemize}
\item We do not know the answer to this for any of the variations.
      We conjecture the upper bound is $\log(2)/\log(3)$ for all variations, with or without the requirement that $A$ is $n$-good.
\end{itemize}
\end{itemize}

It was remarked in Theorem \ref{thm:dichotomy} that, for v1, either $\dim_H(U_A) = 0$ or $\dim_H(U_A) \geq \log(2)/\log(n)$.
For v2 we can have $A$ as an arbitrary set of integers.
If $C_A + C_A = [0,2t]$ for some integer $t$, then the construction of the graph directed iterated function
    system in the proof in Theorem~\ref{thm:dichotomy} can be replaced by a graph directed iterated function
    system with $2t$ nodes.
Hence, this becomes a question the maximal eigenvalue of a non-negative $2t \times 2t$ matrix with integer
    coefficients.
It is clear that for any fixed $t$ the set of possible eigenvalues greater than $1$ are bounded
    away from $1$.
What is not clear is what exactly that bound is, and if it is achievable for all $t$.
We have that the result for v3 is the same as that for v1, and similarly v4 is the same as that for v2.

\section{Conclusions and Open questions}
\label{sec:conc}

In this paper we started the investigation of the Minkowski sum of two
    linear Cantor sets.
We said that a linear Cantor set $C$ was $n$-good if the sum $C+C$ was an
    interval (or contained an interval).
We considered how large or small $C$ could be and still have $C$ as a $n$-good
    or not $n$-good Cantor set.
We considered the set of points $U$ which had a unique representation in
    $C + C$.
Again, we considered how large or small $C$ could be and maintain certain
    properties about $U$.

An interesting, and still unresolved questions is: how big can $U$ be?
It appears computationally that $\dim_H(U) \leq \log(2)/\log(3)$,
    with this bound only being achieved at powers of $3$, and this
    upper bound being approached for large $n$.
We conjecture that $\dim_H(U) \leq \log(2)/\log(3)$ with equality
    only if $n= 3^k$ for some $k$.

A second interesting question, which was not considered, was higher
    sums.
For example, what can be said about $C+C+C$?


\providecommand{\bysame}{\leavevmode\hbox to3em{\hrulefill}\thinspace}
\providecommand{\MR}{\relax\ifhmode\unskip\space\fi MR }
\providecommand{\MRhref}[2]{%
  \href{http://www.ams.org/mathscinet-getitem?mr=#1}{#2}
}
\providecommand{\href}[2]{#2}

\end{document}